\numberwithin{equation}{section}
 \def\Hom{\mbox{\rm Hom}}
\def\Im{\mbox{\rm Im}}
\def\P{\mathcal {P}}
\def\A{\mathcal{A}} 
\def\Id{\mbox{\rm Id}\,} \def\Im{\mbox{\rm Im}\,}
\def\E{\mathbb{E}}
\def\s{\mathfrak{s}}
\def\B{\mathcal {B}}\def\C{\mathcal {C}}
\def\X{\mathcal{X}}
\def\Y{\mathcal{Y}}
\def\I{\mathcal{I}}
\newcommand{\bsm}{\begin{smallmatrix}}
\newcommand{\esm}{\end{smallmatrix}} %%%%%%\bsm
\newtheorem{theorem}{Theorem}[section]
\newtheorem{proposition}[theorem]{Proposition}
\newtheorem{definition}[theorem]{Definition}
\newtheorem{remark}[theorem]{Remark}
\newtheorem{lemma}[theorem]{Lemma}
\newtheorem{corollary}[theorem]{Corollary}
\newtheorem{condition}[theorem]{Condition}
\newtheorem{theorem*}{Theorem}
\newcommand{\Ker}{\operatorname{Ker}}
\newcommand{\RNum}[1]{\uppercase\expandafter{\romannumeral #1\relax}}% 罗马数字
\title{ \bf $n$-cotorsion pairs in a recollement of extriangulated categories
\footnotetext{$^\ast$Corresponding author.~~Xin Ma is supported by Central Plains Science and Technology Innovation Youth Top-notch Talent and Henan University of Engineering (DKJ2019010). Panyue Zhou is supported by the Scientific Research Fund of Hunan Provincial Education Department (Grant No. 24A0221).
} }
\author{Xin Ma and Panyue Zhou$^\ast$}
\date{ }
\begin{document}

\baselineskip=16pt
\maketitle

\begin{abstract}
\begin{spacing}{1.3}
Let $(\mathcal{A}, \mathcal{B}, \mathcal{C})$ be a recollement of extriangulated categories.
In this paper,
we first show how to obtain an $n$-cotorsion pair in $\mathcal{B}$ from given $n$-cotorsion pairs in $\mathcal{A}$ and $\mathcal{C}$.
Conversely, we prove that an $n$-cotorsion pair in $\mathcal{B}$ can induce $n$-cotorsion pairs in $\mathcal{A}$ and $\mathcal{C}$  under suitable conditions.
As applications, several related results %\checks{and examples }
are provided to illustrate our construction.
\end{spacing}
\vspace{2mm}

\hspace{-5mm}\textbf{Keywords:} extriangulated category; recollement; $n$-cotorsion pair; $(n+1)$-cluster tilting subcategory\\ [0.2cm]
\textbf{ 2020 Mathematics Subject Classification:} 18E40; 18G80; 18E10
\end{abstract}

\pagestyle{myheadings}
\markboth{\rightline {\scriptsize X. Ma, P. Zhou\hspace{2mm}}}%%%%%%%%%%%    %%%%%%%%    %%%%%%%%%    %%%%%%% 页上角
{\leftline{\scriptsize $n$-cotorsion pairs in a recollement of extriangulated categories}}%%%%%%%   %%%%%    %%%%%%    %%%%%%%页上角

%\section{Introduction} %delete * to number this section

%\tableofcontents

\section{Introduction}

The notion of an extriangulated category was introduced by Nakaoka and Palu \cite{Na} as a broad unifying framework that simultaneously generalizes exact categories and triangulated categories.
This framework is sufficiently flexible to accommodate numerous examples which are neither exact nor triangulated in nature (see \cite{HZZ20P, INY18A, Na, ZZ18T} for detailed constructions).
In this setting, a variety of classical results from exact and triangulated categories can be reformulated and understood in a common language, thereby allowing techniques and ideas from both theories to be applied in a unified manner \cite{GNP21, HZZ21G, HZZ20P, HZZ21P, HZ21R, INY18A, Liu, ZZ20T, ZZ21G}.
Such a unification not only facilitates the transfer of methods but also provides new perspectives for extracting structural properties that are shared between these two important classes of categories.

Building upon this foundation, Wang, Wei, and Zhang \cite{WWZ20R} formulated the concept of recollements for extriangulated categories.
This concept extends the classical notion of recollements in abelian categories and in triangulated categories, first established by Beilinson, Bernstein, and Deligne \cite{BBD}, and reveals deep connections between these two settings.
Indeed, the interplay between abelian and triangulated recollements has been well-documented.
In particular, Chen \cite{C13C} studied cotorsion pairs in a recollement of triangulated categories, and further established a method for constructing recollements of abelian categories from a given recollement of triangulated categories.
In recent years, considerable attention has been devoted to gluing techniques in the context of recollements of extriangulated categories.
For example, He, Hu, and Zhou \cite{HHZ21T} studied the gluing of torsion pairs;
Ma, Zhao, and Zhuang \cite{MZZ} examined the gluing of resolving subcategories together with their associated resolution dimensions;
and Gu, Ma, and Tan \cite{GMT} analyzed the influence of recollements on homological invariants such as global dimension and extension dimension.
More recently, Ma and Zhou \cite{MZ25D} investigated the relationship between coresolution dimensions of subcategories and the construction of hereditary cotorsion pairs in a recollement of extriangulated categories.
It should be emphasized that many other related contributions exist in the literature, which are not enumerated here, collectively underscoring both the significance and the active development of research in this area.

Cao, Wei, and Wu \cite{CWW24R} investigated the interplay between $n$-cotorsion pairs in a recollement of abelian categories. Their results were subsequently extended to the setting of extriangulated categories by He and He \cite{HH25n}, thereby broadening the applicability of the theory to a more general categorical framework.
The concept of $n$-cotorsion pairs in extriangulated categories was formulated by Chang, Liu, and Zhou \cite{CLZ25M} as a simultaneous generalization of $n$-cotorsion pairs in triangulated categories \cite{CZ25M} and cotorsion pairs in extriangulated categories \cite{Na}. It is worth emphasizing that this notion is distinct from the version studied in \cite{HZ}; in particular, the $n$-cotorsion pairs considered in \cite{CLZ25M} are stronger than the weaker form introduced by He and Zhou \cite{HZ}. Such refinements play an important role in unifying and extending homological constructions across different categorical contexts.

In the present work, we focus on constructing $n$-cotorsion pairs within a recollement of extriangulated categories, as defined in Definition~\ref{def-n-cotor}. In Section~2, we recall fundamental notions and properties concerning extriangulated categories and recollements, which will be used throughout the paper. In Section~3, we consider a recollement $(\mathcal{A},\mathcal{B},\mathcal{C})$ of extriangulated categories and establish a method to construct an $n$-cotorsion pair in $\mathcal{B}$ from given $n$-cotorsion pairs in $\mathcal{A}$ and $\mathcal{C}$ (Theorem~\ref{main-thm1}). As an application, we obtain a construction of $(n+1)$-cluster tilting subcategories in the recollement $(\mathcal{A},\mathcal{B},\mathcal{C})$ (Corollary~\ref{main-1-2}). In particular, applying Corollary~\ref{main-1-2} to the triangulated case recovers the result of \cite[Theorem~3.4]{LZZ24R}. Moreover, we show that an $n$-cotorsion pair in $\mathcal{B}$ can, under suitable conditions, give rise to $n$-cotorsion pairs in $\mathcal{A}$ and $\mathcal{C}$ (Theorem~\ref{main-thm2}). Several corollaries (Corollaries~\ref{main-2-1}--\ref{main-2-5}) illustrate these results, further demonstrating the scope and flexibility of the approach.

Throughout this paper, all subcategories are assumed to be full, additive, and closed under isomorphisms.

\section{Preliminaries}

We begin by recalling several key notions and basic properties of extriangulated categories, following the formulation in \cite{Na}. For detailed definitions and illustrative examples, the reader is referred to \cite{Na}. Here, we restrict ourselves to a concise summary that will be used throughout the paper.

Let $\mathcal{C}$ be an additive category, and let
\[
\mathbb{E} \colon \mathcal{C}^{\mathrm{op}} \times \mathcal{C} \longrightarrow \mathrm{Ab}
\]
be an additive bifunctor, where $\mathrm{Ab}$ denotes the category of abelian groups. For any pair of objects $A, C \in \mathcal{C}$, an element $\delta \in \mathbb{E}(C, A)$ is called an \emph{$\mathbb{E}$-extension} from $A$ to $C$.

A \emph{realization} of $\mathbb{E}$ is a rule $\mathfrak{s}$ which assigns to each $\mathbb{E}$-extension $\delta \in \mathbb{E}(C, A)$ an equivalence class of sequences of the form
\[
\mathfrak{s}(\delta) \;=\; \xymatrix@C=0.8cm{[A \ar[r]^{x} & B \ar[r]^{y} & C]},
\]
in such a way that the diagrams required in \cite[Definition~2.9]{Na} commute. Intuitively, $\mathfrak{s}$ serves as a bridge connecting abstract extension classes with concrete diagrammatic realizations inside $\mathcal{C}$.

An \emph{extriangulated category} is then a triple $(\mathcal{C}, \mathbb{E}, \mathfrak{s})$ satisfying the following conditions:
\begin{itemize}
    \item[(1)] $\mathbb{E} \colon \mathcal{C}^{\mathrm{op}} \times \mathcal{C} \to \mathrm{Ab}$ is an additive bifunctor.
    \item[(2)] $\mathfrak{s}$ is an additive realization of $\mathbb{E}$.
    \item[(3)] The pair $(\mathbb{E}, \mathfrak{s})$ satisfies the compatibility conditions specified in \cite[Definition~2.12]{Na}.
\end{itemize}

When there is no ambiguity, we simply write $\mathcal{C}:= (\mathcal{C}, \mathbb{E}, \mathfrak{s})$ to denote an extriangulated category. This framework, as established in \cite{Na}, simultaneously encompasses exact categories and triangulated categories, while also admitting examples beyond both settings, thus providing a versatile environment for homological and categorical constructions.

Let $\X, \Y$ be subcategories of $\mathcal{C}$.
We recall the following notations from \cite{Na}.
\begin{itemize}
    \item[(1)]
    Set
    \begin{align*}
{\rm Cone}(\X,\Y):=\{C\in \C\mid \text{ there is an }\E\text{-triangle}\xymatrix@C=15pt{X\ar[r] & Y \ar[r] & C\ar@{-->}[r]&}\text{ with }X\in \X\text{ and }Y\in \Y \};\\
{\rm Cocone}(\X,\Y):=\{C\in \C\mid \text{ there is an }\E\text{-triangle}\xymatrix@C=15pt{C\ar[r] & X \ar[r] & Y\ar@{-->}[r]&}\text{ with }X\in \X\text{ and }Y\in \Y \}.
\end{align*}
    \item[(2)] $\mathcal{X}$ is said to be closed under $\E$-extensions
  if for any $\E$-triangle
      $A\stackrel{}{\longrightarrow}B\stackrel{}{\longrightarrow}C\stackrel{}\dashrightarrow$
in $\C$, $A, C \in \mathcal{X}$, then $B \in \mathcal{X}$ as well.
\end{itemize}

\begin{definition}{\rm(\cite[Definitions 3.23 and 3.25]{Na})
Let $\mathcal{C}$ be an extriangulated category.
\begin{itemize}
\item[(1)] An object $P$ in $\mathcal{C}$ is called {\em projective} if for any $\mathbb{E}$-triangle $A\stackrel{x}{\longrightarrow}B\stackrel{y}{\longrightarrow}C\stackrel{}\dashrightarrow$ and any morphism $c$ in $\mathcal{C}(P,C)$, there exists $b$ in $\mathcal{C}(P,B)$ such that $yb=c$.
We denote the subcategory of projective objects in $\mathcal{C}$ by $\mathcal{P}(\mathcal{C})$.
Dually, the {\em injective} objects are defined, and the subcategory of injective objects in $\mathcal{C}$ is denoted by $\mathcal{I}(\mathcal{C})$.
\item[(2)] We say that $\mathcal{C}$ {\em has enough projectives} if for any object $M\in\mathcal{C}$, there exists an $\mathbb{E}$-triangle $$A\stackrel{}{\longrightarrow}P\stackrel{}{\longrightarrow}M\stackrel{}\dashrightarrow$$ satisfying $P\in\mathcal{P}(\mathcal{C})$. Dually, we can define $\mathcal{C}$ {\em having enough injectives}.
\end{itemize}}
\end{definition}

In what follows, all extriangulated categories are assumed to have enough projective and enough injective objects, and to satisfy the following condition.

\begin{condition}\label{WIC} {\rm \bf (WIC)} {\rm (see \cite[Condition 5.8]{Na})} Let $f:X\rightarrow Y$ and  $g:Y\rightarrow Z$ be any composable pair of morphisms in $\mathcal{C}$.
\begin{itemize}
\item[\rm (1)]  If $gf$ is an inflation, then $f$ is an inflation.
\item[\rm (2)] If $gf$ is a deflation, then $g$ is a deflation.
\end{itemize}
\end{condition}

\begin{remark}
Note that every extriangulated category
$\C$ under consideration is assumed to satisfy the above condition, which is equivalent to
$\C$ being weakly idempotent complete (see {\rm \cite[Proposition~2.7]{K}}).
\end{remark}

For a subcategory $\X\subseteq\C$, put $\Omega^0\X=\X$, and define $\Omega^k\X$ for $k>0$ inductively by
$$\Omega^k\X=\Omega(\Omega^{k-1}\X)={\rm Cocone}(\P,\Omega^{k-1}\X).$$
We call $\Omega^k\X$ the $k$-th syzygy of $\X$.

Dually we define the $k$-th cosyzygy $\Sigma^k\X$ by
$\Sigma^0\X=\X$ and $\Sigma^k\X={\rm Cone}(\Sigma^{k-1}\X,\I)$ for $k>0$.

Liu and Nakaoka \cite{Liu} defined higher extension groups in an extriangulated category as $$\E(A,\Sigma^kB)\simeq\E(\Omega^kA,B)~~\mbox{for}~~k\geq0.$$
For convenience, we denote $\E(A,\Sigma^kB)\simeq\E(\Omega^kA,B)$ by $\E^{k+1}(A,B)$ for $k\geq0$.
They proved the following result, which is an important tool in relative homological theory of extriangulated categories and can be called ``dimension shifting" as a strategy.
\begin{lemma}\label{lem-long}
Let $\C$ be an extriangulated category. Assume that
$$\xymatrix{A\ar[r]^{f}& B\ar[r]^{g}&C\ar@{-->}[r]^{\delta}&}$$
is an $\E$-triangle in $\C$. Then for any object $X\in\C$ and $k\geq1$, we have the following exact sequences:
$$\cdots\xrightarrow{}\E^k(X,A)\xrightarrow{}\E^k(X,B)\xrightarrow{}\E^k(X,C)\xrightarrow{}\E^{k+1}(X,A)\xrightarrow{}\E^{k+1}(X,B)\xrightarrow{}\cdots;$$
$$\cdots\xrightarrow{}\E^k(C,X)\xrightarrow{}\E^k(B,X)\xrightarrow{}\E^k(A,X)\xrightarrow{}\E^{k+1}(C,X)\xrightarrow{}\E^{k+1}(B,X)\xrightarrow{}\cdots.$$
\end{lemma}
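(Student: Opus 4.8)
The plan is to reduce the statement to the known long exact sequences associated to an $\E$-triangle in $\C$ (the ones for $\mathcal{C}(-,-)$ and $\E(-,-) = \E^1(-,-)$, which are standard in the extriangulated setting, see \cite[Corollary~3.12]{Na}), and then propagate them to higher degrees via the syzygy/cosyzygy definition of $\E^k$. I treat the first sequence, the one contravariant in the first variable fixed as a covariant functor in the second; the second is dual and handled by the opposite category.

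First I would fix the $\E$-triangle $A\xrightarrow{f} B\xrightarrow{g} C\dashrightarrow$ and an object $X$. For the initial portion, I apply the contravariant-covariant long exact sequence attached to this $\E$-triangle with $X$ in the first slot: this gives the exact sequence
\[
\mathcal{C}(X,A)\to \mathcal{C}(X,B)\to \mathcal{C}(X,C)\to \E(X,A)\to \E(X,B)\to \E(X,C).
\]
To continue past $\E(X,C)=\E^1(X,C)$, I would use the identification $\E^{k+1}(X,-)\cong \E^k(\Omega X,-)$ together with the fact that, by definition, $\Omega X = \cocone(\P, X)$ fits into an $\E$-triangle $\Omega X \to P \to X \dashrightarrow$ with $P$ projective. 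Applying $\mathcal{C}(-,A)$, $\mathcal{C}(-,B)$, $\mathcal{C}(-,C)$ along the $\E$-triangle $A\to B\to C\dashrightarrow$ but now with $\Omega X$, and using that $\E^i(P,-)=0$ for $i\ge 1$ since $P$ is projective, one extracts the connecting maps $\E(X,C)\cong \E^1(X,C) \to \E^2(X,A)$ and the exactness of $\E^1(X,B)\to\E^1(X,C)\to\E^2(X,A)\to\E^2(X,B)\to\E^2(X,C)$. Iterating this syzygy shift — replacing $X$ by $\Omega^{k-1}X$ at the $k$-th stage and invoking $\E^i(\P,-)=0$ — yields exactness at every spot $\E^k(X,B)$, $\E^k(X,C)$, $\E^{k+1}(X,A)$ for all $k\ge 1$, which is precisely the asserted infinite exact sequence.

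The dual sequence follows by the same argument applied in $\C^{\mathrm{op}}$: an $\E$-triangle in $\C$ becomes an $\E^{\mathrm{op}}$-triangle in $\C^{\mathrm{op}}$, syzygies in $\C^{\mathrm{op}}$ are cosyzygies in $\C$, projectives become injectives, and $\E^{k}_{\C^{\mathrm{op}}}(X,Y)\cong \E^k_{\C}(Y,X)$, so the first sequence in $\C^{\mathrm{op}}$ transcribes verbatim into the second sequence in $\C$.

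The main obstacle I anticipate is bookkeeping rather than conceptual: one must carefully verify that the connecting homomorphism produced at each syzygy stage agrees (up to the canonical isomorphism $\E^{k+1}(X,-)\cong\E^k(\Omega X,-)$) with the one needed to splice the short pieces into a single coherent long sequence, i.e.\ that the maps are natural in the $\E$-triangle $A\to B\to C\dashrightarrow$ and compatible with the dimension-shifting isomorphisms. This naturality is where the compatibility axioms of \cite[Definition~2.12]{Na} and the additivity of $\mathfrak{s}$ are really used; once it is in place, the induction on $k$ is routine.
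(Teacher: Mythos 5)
The paper does not prove this lemma --- it is quoted from Liu--Nakaoka \cite{Liu} --- and your dimension-shifting plan (splice the six-term exact sequences of \cite[Corollary~3.12]{Na} along the syzygy triangles $\Omega^{k}X\to P\to \Omega^{k-1}X\dashrightarrow$, using $\E(P,-)=0$ for $P$ projective, then dualize in $\C^{\mathrm{op}}$ for the second sequence) is essentially the argument given in that source. The outline is correct, and the chases you defer --- well-definedness of the connecting map on the quotient $\mathcal{C}(\Omega^{k}X,C)/\Ima\mathcal{C}(P,C)\cong\E^{k}(X,C)$ and exactness at each spot --- all reduce to the six-term sequences evaluated at $\Omega^{k}X$ and at $P$ together with projectivity of $P$, so there is no gap beyond the bookkeeping you already identify.
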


At the end of this section, we recall the concept of recollements of extriangulated categories \cite{WWZ20R}, which gives a simultaneous generalization of recollements of triangulated categories and abelian categories (see \cite{BBD, Fr}).
Some details can be found in \cite{WWZ20R}, we omit them here.

\begin{definition}\label{def-rec}{\rm(\cite[Definition 3.1]{WWZ20R})
Let $\mathcal{A}$, $\mathcal{B}$ and $\mathcal{C}$ be three extriangulated categories. A \emph{recollement} of $\mathcal{B}$ relative to
$\mathcal{A}$ and $\mathcal{C}$, denoted by ($\mathcal{A}$, $\mathcal{B}$, $\mathcal{C}$), is a diagram
\begin{equation*}\label{recolle}
  \xymatrix{\mathcal{A}\ar[rr]|{i_{*}}&&\ar@/_1pc/[ll]|{i^{*}}\ar@/^1pc/[ll]|{i^{!}}\mathcal{B}
\ar[rr]|{j^{\ast}}&&\ar@/_1pc/[ll]|{j_{!}}\ar@/^1pc/[ll]|{j_{\ast}}\mathcal{C}}
\end{equation*}
given by two exact functors $i_{*},j^{\ast}$, two right exact functors $i^{\ast}$, $j_!$ and two left exact functors $i^{!}$, $j_\ast$, which satisfies the following conditions:
\begin{itemize}
  \item [\rm (R1)] $(i^{*}, i_{\ast}, i^{!})$ and $(j_!, j^\ast, j_\ast)$ are adjoint triples.
  \item [\rm (R2)] $i_\ast$, $j_!$ and $j_\ast$ are fully faithful.
  \item [\rm (R3)] $\Im i_{\ast}=\Ker j^{\ast}$.
  \item [\rm (R4)] For each $B\in\mathcal{B}$, there exists a left exact $\mathbb{E}$-triangle sequence
$$
  \xymatrix{i_\ast i^!B\ar[r]^-{\theta_B}&B\ar[r]^-{\vartheta_B}&j_\ast j^\ast B\ar[r]&i_\ast A}
$$
 in $\mathcal{B}$ with $A\in \mathcal{A}$, where $\theta_B$ and  $\vartheta_B$ are given by the adjunction morphisms.
  \item [\rm (R5)] For each $B\in\mathcal{B}$, there exists a right exact $\mathbb{E}$-triangle sequence
$$
  \xymatrix{i_\ast\ar[r]A' &j_! j^\ast B\ar[r]^-{\upsilon_B}&B\ar[r]^-{\nu_B}&i_\ast i^\ast B&}
$$
in $\mathcal{B}$ with $A'\in \mathcal{A}$, where $\upsilon_B$ and $\nu_B$ are given by the adjunction morphisms.
\end{itemize}}
\end{definition}

 In particular, if $\A$, $\B$ and $\C$ are abelian categories, then $(\A,\B,\C)$ is a recollement if it satisfies (R1), (R2) and $j^{*}i_{*}=0$ in sense of \cite{Fr}.

\vspace{2mm}

We collect some properties of recollements of extriangulated categories (see \cite{WWZ20R}).

\begin{lemma}\label{lem-rec} Let ($\mathcal{A}$, $\mathcal{B}$, $\mathcal{C}$) be a recollement of extriangulated categories.

$(1)$ All the natural transformations
$$i^{\ast}i_{\ast}\Rightarrow\Id_{\A},~\Id_{\A}\Rightarrow i^{!}i_{\ast},~\Id_{\C}\Rightarrow j^{\ast}j_{!},~j^{\ast}j_{\ast}\Rightarrow\Id_{\C}$$
are natural isomorphisms.
Moreover, $i^{!}$, $i^{*}$ and $j^{*}$ are dense.

$(2)$ $i^{\ast}j_!=0$ and $i^{!}j_\ast=0$.

$(3)$ $i^{\ast}$ preserves projective objects and $i^{!}$ preserves injective objects.

$(3')$ $j_{!}$ preserves projective objects and $j_{\ast}$ preserves injective objects.

$(4)$ If $i^{!}$ (resp., $j_{\ast}$) is  exact, then $i_{\ast}$ (resp., $j^{\ast}$) preserves projective objects.

$(4')$ If $i^{\ast}$ (resp., $j_{!}$) is  exact, then $i_{\ast}$ (resp., $j^{\ast}$) preserves injective objects.

$(5)$ If $i^{!}$ is exact, then $j_{\ast}$ is exact.

$(5')$
If $i^{\ast}$ is exact, then $j_{!}$ is  exact.

$(6)$ If $i^{!}$ is exact, for each $B\in\mathcal{B}$, there is an $\mathbb{E}$-triangle
  \begin{equation*}\label{third}
  \xymatrix{i_\ast i^! B\ar[r]^-{\theta_B}&B\ar[r]^-{\vartheta_B}&j_\ast j^\ast B\ar@{-->}[r]&}
   \end{equation*}
 in $\mathcal{B}$ where $\theta_B$ and  $\vartheta_B$ are given by the adjunction morphisms.

$(6')$ If $i^{\ast}$ is exact, for each $B\in\mathcal{B}$, there is an $\mathbb{E}$-triangle
  \begin{equation*}\label{four}
  \xymatrix{ j_! j^\ast B\ar[r]^-{\upsilon_B}&B\ar[r]^-{\nu_B}&i_\ast i^\ast B\ar@{-->}[r]&}
   \end{equation*}
in $\mathcal{B}$ where $\upsilon_B$ and $\nu_B$ are given by the adjunction morphisms.
\end{lemma}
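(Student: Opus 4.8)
The plan is to verify all the assertions from the axioms (R1)--(R5) in three stages of increasing difficulty: the purely formal items (1)--(2), the ``adjoint of an exact functor'' items (3)--(4$'$), and the substantive items (5)--(6$'$), which exploit the defining $\E$-triangle sequences of (R4)/(R5) together with condition~(WIC).

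For (1) I would invoke the standard fact that, for an adjunction $F\dashv G$ of additive functors, the counit $FG\Rightarrow\Id$ is a natural isomorphism exactly when $G$ is fully faithful, and the unit $\Id\Rightarrow GF$ is a natural isomorphism exactly when $F$ is fully faithful; feeding the three fully faithful functors of (R2) into the adjoint triples $i^*\dashv i_*\dashv i^!$ and $j_!\dashv j^*\dashv j_*$ of (R1) produces the four asserted natural isomorphisms, and density of $i^*,i^!,j^*$ is then immediate since every $A\in\A$ equals $i^*(i_*A)\cong i^!(i_*A)$ and every $C\in\C$ equals $j^*(j_!C)$. For (2) I would combine the adjunction identities $\Hom_{\A}(i^*j_!C,A)\cong\Hom_{\C}(C,j^*i_*A)$ and $\Hom_{\A}(A,i^!j_*C)\cong\Hom_{\C}(j^*i_*A,C)$ with $j^*i_*=0$, which is precisely (R3) ($\Im i_*=\Ker j^*$), and then take $A$ to be the object in question so that its identity morphism is forced to be zero.

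Items (3), (3$'$), (4), (4$'$) all reduce to one principle: if $F\dashv G$ with $G$ exact then $F$ preserves projectives (apply $\Hom(FP,-)\cong\Hom(P,G-)$ to a deflation, which $G$ keeps a deflation), and dually if $F\dashv G$ with $F$ exact then $G$ preserves injectives. Since $i_*$ and $j^*$ are exact by hypothesis, (3) follows from $i^*\dashv i_*$ and $i_*\dashv i^!$, and (3$'$) from $j_!\dashv j^*$ and $j^*\dashv j_*$; when $i^!$ (resp.\ $j_*$) is exact, (4) follows from $i_*\dashv i^!$ (resp.\ $j^*\dashv j_*$); and when $i^*$ (resp.\ $j_!$) is exact, (4$'$) follows from $i^*\dashv i_*$ (resp.\ $j_!\dashv j^*$).

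For (6), assuming $i^!$ exact, I would read the left exact $\E$-triangle sequence of (R4) as two $\E$-triangles glued along an object $W$, namely $i_*i^!B\xrightarrow{\theta_B}B\to W\dashrightarrow$ and $W\to j_*j^*B\to i_*A\dashrightarrow$ with $\vartheta_B$ their composite, and apply the exact functor $i^!$ to both. The triangle identity for $i_*\dashv i^!$ (using $\Id_{\A}\cong i^!i_*$ from (1)) makes $i^!\theta_B$ invertible, so the first $\E$-triangle forces $i^!W=0$; then $i^!j_*=0$ from (2) makes the second $\E$-triangle force $i^!i_*A=0$, hence $A=0$. Thus $W\cong j_*j^*B$ and the first $\E$-triangle becomes the asserted $i_*i^!B\xrightarrow{\theta_B}B\xrightarrow{\vartheta_B}j_*j^*B\dashrightarrow$. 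Item (6$'$) is the formal dual, obtained from (R5) using $\Id_{\A}\cong i^*i_*$ and $i^*j_!=0$.

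For (5), assume $i^!$ exact; since $j_*$ is already left exact it suffices to show it sends deflations to deflations. Applying (6) to $B=j_!Z$ and using $j^*j_!Z\cong Z$ yields a natural $\E$-triangle $i_*i^!j_!Z\to j_!Z\xrightarrow{\mu_Z}j_*Z\dashrightarrow$, so every $\mu_Z$ is a deflation. Given an $\E$-triangle $X\to Y\xrightarrow{y}Z\dashrightarrow$ in $\C$, the map $j_!y$ is a deflation because $j_!$ is right exact, so the composite $\mu_Z\circ j_!y$ is a deflation; by naturality of $\mu$ this composite equals $j_*y\circ\mu_Y$, so condition~(WIC)(2) forces $j_*y$ to be a deflation, and hence $j_*$ is exact. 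Item (5$'$) is the dual: (6$'$) applied to $B=j_*Z$ produces a natural transformation $j_!\Rightarrow j_*$ whose components $j_!Z\to j_*Z$ are inflations, and combining this (via naturality, the left exactness of $j_*$, and (WIC)(1)) shows that $j_!x$ is an inflation whenever $x$ is, so $j_!$, being already right exact, is exact. I expect the only real subtlety to lie in (6)/(6$'$) --- correctly unpacking what ``left exact $\E$-triangle sequence'' in (R4)/(R5) supplies, and checking that the exactness of $i^!$ (resp.\ $i^*$) may be used in the strong form of sending $\E$-triangles to $\E$-triangles; once (6) and (6$'$) are in hand, (5) and (5$'$) fall out of naturality and (WIC) with no extra work.
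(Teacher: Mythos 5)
The paper gives no proof of this lemma --- it simply collects these facts from \cite{WWZ20R} --- and your argument is a correct reconstruction of the proofs given there: the adjunction formalism (counit/unit isomorphisms for fully faithful adjoints, (R3) via $\Hom$-transposition, and ``adjoint of an exact functor preserves projectives/injectives'') for (1)--(4$'$), and the splicing of the (R4)/(R5) sequences combined with the exactness of $i^{!}$ (resp.\ $i^{*}$), the triangle identities, and $i^{!}j_{*}=0=i^{*}j_{!}$ for (6)/(6$'$), from which (5)/(5$'$) follow by naturality of $j_{!}\Rightarrow j_{*}$ and (WIC). The one dependency you rightly flag is definitional: your reductions ``left exact sequence = two spliced $\mathbb{E}$-triangles'' and ``a left (right) exact functor that also preserves deflations (inflations) is exact'' are exactly what the definitions in \cite{WWZ20R} supply, so the argument closes.
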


\section{Our main results}

Let $\X$ be a subcategory of $\C$,
We define the \emph{right $k$-th orthogonal} of $\mathcal{X}$ as
\[
\mathcal{X}^{\perp_k} := \{ N \in \mathcal{C} \mid \mathbb{E}^k_{\C}(X, N) = 0 \text{ for any } X\in \X\}.
\]
Dually, the \emph{left $k$-th orthogonal} of $\mathcal{X}$ is defined by
\[
{}^{\perp_k} \mathcal{X} := \{ M \in \mathcal{C} \mid \mathbb{E}^k_{\C}(M, X) = 0 \text{ for any } X\in \X \}.
\]
Now we recall the notion of cotorsion pairs.

\begin{definition}{\rm (\cite[Definition 4.1]{Na})}\label{DefCotors}
{\rm Let $\X,\Y\subseteq\C$ be a pair of subcategories which are closed
under direct summands. The pair $(\X,\Y)$ is called a {\it cotorsion pair} in $\C$ if it satisfies the following conditions.
\begin{enumerate}
\item[(1)] $\E(\X,\Y)=0$.
\item[(2)] For any $C\in\C$, there exist two $\E$-triangles
$$\xymatrix@C=15pt{Y^1\ar[r]& X^1\ar[r]& C\ar@{-->}[r]&}$$
$$\xymatrix@C=15pt{C\ar[r]& Y_1\ar[r]& X_1\ar@{-->}[r]&}$$
satisfying $X^1, X_1 \in\X,Y^1, Y_1 \in\Y$.
\end{enumerate}
}
\end{definition}
If $(\X,\X)$ is a cotorsion pair in $\C$, the subcategory $\X$ is called the cluster tilting subcategory of $\C$ in the sense of \cite[Remark 2.11]{ZZ19C}.

We recall the following notions from \cite{AR}, where the concepts of approximations and finiteness of subcategories were introduced to describe how objects in a category can be effectively approximated by objects in a given subcategory.

\begin{definition}
Let $\mathcal{C}$ be an extriangulated category and let $\mathcal{X}$ be a subcategory of $\mathcal{C}$.
\begin{itemize}
  \item[\rm (1)] For an object $C \in \mathcal{C}$, a \emph{right $\mathcal{X}$-approximation} of $C$ is a morphism $f \colon X \to C$ with $X \in \mathcal{X}$ such that, for every $Y \in \mathcal{X}$, the induced sequence
 $$
    \Hom_{\mathcal{C}}(Y,X) \xrightarrow{{\rm Hom}_{\mathcal C}(Y,f)} \Hom_{\mathcal{C}}(Y,C) \to 0
  $$
  is exact. Dually, a \emph{left $\mathcal{X}$-approximation} of $C$ is defined.
  \item[\rm (2)] The subcategory $\mathcal{X}$ is called \emph{contravariantly finite} in $\mathcal{C}$ if every object $C \in \mathcal{C}$ admits a right $\mathcal{X}$-approximation, and \emph{covariantly finite} in $\mathcal{C}$ if every object $C \in \mathcal{C}$ admits a left $\mathcal{X}$-approximation.
\end{itemize}
\end{definition}

The following result gave a characterization of cotorsion pairs.
\begin{proposition}{\rm (\cite[Proposition 2.5]{CLZ25M})}\label{prop-1}
Let $\C$ be an extriangulated category, and let $\X$ and $\Y$ be subcategories of $\C$ which are closed under direct summands.
Then $(\X, \Y)$ is a cotorsion pair if and only if the following conditions are satisfied.
\begin{itemize}
  \item [\rm (1)]~ $\X={^{\perp_1}\Y}$.
  \item [\rm (2)]~ $\Y=\X^{\perp_1}$.
  \item [\rm (3)]~ $\X$ is contravariantly finite and $\Y$ is covariantly finite in $\C$.
\end{itemize}
\end{proposition}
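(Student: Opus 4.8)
I would verify the two implications separately. For the ``only if'' part, suppose $(\X,\Y)$ is a cotorsion pair in the sense of Definition~\ref{DefCotors}. The inclusions $\X\subseteq{}^{\perp_1}\Y$ and $\Y\subseteq\X^{\perp_1}$ are just a reformulation of Definition~\ref{DefCotors}(1), since $\E^1=\E$. For the reverse inclusion in (1), I would take $M\in{}^{\perp_1}\Y$ and feed it into Definition~\ref{DefCotors}(2) to get an $\E$-triangle $Y^1\to X^1\to M\dashrightarrow\delta$ with $X^1\in\X$ and $Y^1\in\Y$; then $\delta\in\E(M,Y^1)=\E^1(M,Y^1)=0$, so this $\E$-triangle splits, $M$ is a direct summand of $X^1\in\X$, and hence $M\in\X$ by closure under direct summands. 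The reverse inclusion in (2) is dual. For (3), given $C\in\C$ I would apply $\Hom_{\C}(X',-)$ with $X'\in\X$ to the $\E$-triangle $Y^1\to X^1\xrightarrow{g}C\dashrightarrow$ provided by Definition~\ref{DefCotors}(2); by the long exact sequence of Lemma~\ref{lem-long} the cokernel of $g_{*}\colon\Hom_{\C}(X',X^1)\to\Hom_{\C}(X',C)$ embeds into $\E^1(X',Y^1)=\E(X',Y^1)=0$, so $g$ is a right $\X$-approximation; thus $\X$ is contravariantly finite, and dually $\Y$ is covariantly finite.

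For the ``if'' part, assume (1)--(3). First I would record the consequences of (1) and (2): condition~(1) already gives $\E(\X,\Y)=0$, which is Definition~\ref{DefCotors}(1); applying Lemma~\ref{lem-long} to any $\E$-triangle $X_1\to W\to X_2\dashrightarrow$ with $X_1,X_2\in\X$ gives $\E^1(W,Y)=0$ for all $Y\in\Y$, so $W\in{}^{\perp_1}\Y=\X$, i.e.\ $\X$ is closed under $\E$-extensions, and dually so is $\Y$; moreover, since projectives (resp.\ injectives) kill $\E^1$, we get $\P\subseteq{}^{\perp_1}\Y=\X$ and $\I\subseteq\X^{\perp_1}=\Y$. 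Now fix $C\in\C$ and, using (3), a right $\X$-approximation $f\colon X\to C$. Since $\C$ has enough projectives there is a deflation $p\colon P\to C$ with $P\in\P\subseteq\X$; replacing $(X,f)$ by $(X\oplus P,(f,p))$ and using Condition~\ref{WIC}(2), I may assume $f$ is a deflation, so that there is an $\E$-triangle $Y^1\to X^1\xrightarrow{f}C\dashrightarrow$ with $X^1\in\X$. The crucial step is a Wakamatsu-type lemma: since $\X$ is closed under $\E$-extensions and $f$ is a deflation which is a right $\X$-approximation, $\E^1(X',Y^1)=0$ for every $X'\in\X$, i.e.\ $Y^1\in\X^{\perp_1}=\Y$ by (2); this is the first required $\E$-triangle. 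Dually --- using that $\Y$ is covariantly finite, that $\C$ has enough injectives with $\I\subseteq\Y$, Condition~\ref{WIC}(1), and the dual Wakamatsu lemma --- I obtain an $\E$-triangle $C\to Y_1\to X_1\dashrightarrow$ with $Y_1\in\Y$ and $X_1\in\X$. Hence $(\X,\Y)$ satisfies Definition~\ref{DefCotors} and is a cotorsion pair.

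The main obstacle is the Wakamatsu-type lemma, i.e.\ the claim that the object $Y^1$ completing a right $\X$-approximation deflation to an $\E$-triangle lies in $\X^{\perp_1}$. I would prove it by the extriangulated analogue of Wakamatsu's classical argument: given an $\E$-triangle $Y^1\to E\to X'\dashrightarrow$ with $X'\in\X$, push it out along $Y^1\to X^1$ to obtain an $\E$-triangle $X^1\to E'\to X'\dashrightarrow$ with $E'\in\X$ (closure under $\E$-extensions); glue $f\colon X^1\to C$ and the zero morphism $E\to C$ over $Y^1$ to get a morphism $E'\to C$, factor it through $f$ using the approximation property, and chase the resulting identities in the bicartesian pushout square to split $Y^1\to E\to X'\dashrightarrow$. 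The delicate points are: (i) that pushouts along inflations and pullbacks along deflations in $\C$ are bicartesian in the sense of \cite{Na} and possess the universal property used in the chase; (ii) that the reduction of a right $\X$-approximation to a deflation via Condition~\ref{WIC} is legitimate; and (iii) turning the endomorphism bookkeeping of the chase into an actual splitting, which is the place where closure of $\X$ under $\E$-extensions is really used. Everything else is formal manipulation with the long exact sequences of Lemma~\ref{lem-long}.
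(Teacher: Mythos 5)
The paper offers no proof to compare against: Proposition~\ref{prop-1} is imported verbatim from \cite[Proposition 2.5]{CLZ25M}, so your attempt has to stand on its own. Your ``only if'' direction is fine: splitting the $\E$-triangle $Y^1\to X^1\to M\dashrightarrow$ when $M\in{}^{\perp_1}\Y$ and reading off the approximations from the long exact sequence of Lemma~\ref{lem-long} is exactly the standard argument. The problem is the ``if'' direction, at the step you yourself single out as crucial. The Wakamatsu-type lemma you invoke --- that the cocone $Y^1$ of \emph{any} deflation right $\X$-approximation $f\colon X^1\to C$ lies in $\X^{\perp_1}$ when $\X$ is extension-closed --- is false. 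Wakamatsu's lemma holds for \emph{covers} (minimal right approximations), not for precovers, and minimal approximations are not available in this generality. Your pushout-and-chase sketch cannot repair this: carried out carefully it only produces $\tau\mu=1_K+\sigma k$ with $\sigma k$ an endomorphism of $K$ factoring through $k\colon K\to X^1$, which does not split the extension; equivalently, at the level of extension classes one only gets $\epsilon=-\sigma_{*}k_{*}\epsilon$, not $\epsilon=0$.

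Here is a concrete counterexample inside the paper's framework. Let $\C=D^b(\operatorname{mod}k)$ for a field $k$, viewed as extriangulated (so $\mathcal{P}(\C)=\mathcal{I}(\C)=\{0\}$, every morphism is an inflation and a deflation, and WIC holds). Put $\X=\add\{k[i]\mid i\le 0\}$ and $\Y=\add\{k[j]\mid j\ge 0\}$. Since $\E(k[i],k[j])=\Hom(k[i],k[j+1])\ne 0$ exactly when $i=j+1$, one checks $\X={}^{\perp_1}\Y$ and $\Y=\X^{\perp_1}$, both classes are functorially finite and closed under summands, and $(\X,\Y)$ \emph{is} a cotorsion pair. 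Now $f=0\colon k[-3]\to k[5]$ is a deflation right $\X$-approximation of $k[5]$ (because $\Hom(\X,k[5])=0$, every map from $\X$ factors through it), and the associated $\E$-triangle is
$$\xymatrix{k[4]\oplus k[-3]\ar[r]& k[-3]\ar[r]^-{0}& k[5]\ar@{-->}[r]&,}$$
whose first term is not in $\Y$, since $\E(k[-2],k[-3])\cong k\ne 0$ with $k[-2]\in\X$. So your crucial step fails for this (perfectly legitimate) choice of approximation; the dual step for left $\Y$-approximations fails in the same example, and one can even arrange that gluing a bad right $\X$-approximation with a bad left $\Y$-approximation of its cocone produces a middle term outside $\X$. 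The proposition itself is presumably true --- the required $\E$-triangles exist for \emph{some} choice (here $k[4]\to 0\to k[5]\dashrightarrow$) --- but a correct proof must construct that special choice rather than apply Wakamatsu to an arbitrary precover, so this is a genuine missing idea and not a repairable detail.
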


\begin{definition}{\rm (\cite[Definition 5.3]{Liu})}
Let $\mathcal{C}$ be an extriangulated category. A subcategory $\mathcal{X} \subseteq \mathcal{C}$ is called \emph{$(n+1)$-cluster tilting} if the following conditions hold:
\begin{enumerate}
\item[{\rm (1)}] $\mathcal{X}$ is both contravariantly and covariantly finite in $\mathcal{C}$;
\item[{\rm (2)}] $M \in \mathcal{X}$ if and only if $\mathbb{E}^k(\mathcal{X}, M) = 0$ for all $1 \leq k \leq n$;
\item[{\rm (3)}] $M \in \mathcal{X}$ if and only if $\mathbb{E}^k(M, \mathcal{X}) = 0$ for all $1 \leq k \leq n$.
\end{enumerate}
\end{definition}

Now we recall the notion of $n$-cotorsion pairs, which is a generalization of the cotorsion pairs in the sense of Nakaoka and Palu \cite{Na}.

\begin{definition}
{\rm (\cite[Definition 3.1]{CLZ25M})}
\label{def-n-cotor}
Let $\C$ be an extriangulated category and $\X,\Y$ be two subcategories of $\C$ which are closed
under direct summands. The pair $(\X,\Y)$ is called an $n$-\emph{cotorsion pair} if the following conditions hold.
\begin{enumerate}
  \item [\rm (1)] $\X=\bigcap\limits_{k=1}^{n}{^{\perp_k}}\Y$; %~~ for all $1\leq k\leq n$
  \item [\rm (2)] $\Y=\bigcap\limits_{k=1}^{n}\X^{\perp_k}$;%~~ for all $1\leq k\leq n$
   \item [\rm (3)] $\X$ is contravariantly finite and $\Y$ is covariantly finite.
\end{enumerate}
\end{definition}

\begin{remark}\label{remark-nc}
Let $(\C,\E,\s)$ be an extriangulated category with enough projectives and enough injectives.
\begin{itemize}
\item[\rm (1)] It is clear that $\mathcal{X}$ and $\mathcal{Y}$ are closed under $\E$-extensions, and $\mathcal{P(C)}\subseteq \X$, $\mathcal{I(C)}\subseteq\Y$.

    \item[\rm (2)] It is easy to verify that
    $\mathcal{X}$ is $(n+1)$-cluster tilting subcategory if and only if $(\mathcal{X},\mathcal{X})$ is an $n$-cotorsion pair.

    \item[\rm (3)]
By Proposition \ref{prop-1}, the concept of $1$-cotorsion pair is compatible with the classical definition of a cotorsion pair in the sense of Nakaoka and Palu \cite{Na}.

\item[\rm (4)] The concept of $n$-cotorsion pair defined in this paper is a  weaker version than that in \cite{HZ} by He and Zhou,
see \cite[Remark 3.4 and Example 3.5]{CZ25M}.

\item[\rm (5)]
When $\C$ is an abelian category, an $n$-cotorsion pair defined in \cite{HMP} can imply an $n$-cotorsion pair in the sense of Definition \ref{def-n-cotor}. When $\C$ is a triangulated category, an $n$-cotorsion pair in the sense of Definition \ref{def-n-cotor} is compatible with Chang and Zhou defined in \cite{CZ25M}.
\end{itemize}
\end{remark}

\subsection{From $\A$ and $\B$ to $\C$}

The following results are easy and useful in the sequel.

\begin{lemma}{\rm (\cite[Lemma 4.3]{MZ25D})}\label{E-xt}
Let $\A$ and $\B$ be extriangulated categories, and let $F : \A \to \B$ be a functor admitting a right adjoint functor $G$.
For any $X\in \A$, $Y\in \B$ and any $i\geq 1$, if one of the following conditions is satisfied
 \begin{itemize}
 \item[\rm (1)] If $F$ is an exact functor and preserves projective objects;
 \item[\rm (2)] If $G$ is an exact functor and preserves injective objects;
 \end{itemize}
then we have
$$\mathbb{E}_{\B}^{i}(F(X),Y)\cong \mathbb{E}_{\A}^{i}(X,G(Y)).$$
\end{lemma}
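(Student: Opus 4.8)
\textbf{Proof proposal for Lemma~\ref{E-xt}.}

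The plan is to reduce the statement to the adjunction isomorphism on $\Hom$-groups by combining the dimension-shifting machinery of Lemma~\ref{lem-long} with the definition $\E^{i}(A,B)\simeq\E(\Omega^{i-1}A,B)\simeq\E(A,\Sigma^{i-1}B)$ of higher extension groups from \cite{Liu}. Under hypothesis (1), since $F$ is exact and sends projectives to projectives, it sends any $\E$-triangle $A\to P\to M\dashrightarrow$ with $P$ projective to an $\E$-triangle $F(A)\to F(P)\to F(M)\dashrightarrow$ with $F(P)$ projective; hence $F$ commutes with syzygies, i.e. $F(\Omega^{i-1}X)\in\Omega^{i-1}_{\B}F(X)$ up to the usual ambiguity. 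Combined with $\E^{i}_{\B}(F(X),Y)\simeq\E_{\B}(\Omega^{i-1}F(X),Y)\simeq\E_{\B}(F(\Omega^{i-1}X),Y)$, the problem is reduced to the case $i=1$: proving $\E_{\B}(F(A),Y)\cong\E_{\A}(A,G(Y))$ naturally for all $A\in\A$, $Y\in\B$.

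For the $i=1$ case I would argue as follows. Take an $\E$-triangle $A\to P\to A'\dashrightarrow$ in $\A$ with $P$ projective (enough projectives); applying the exact functor $F$ gives an $\E$-triangle $F(A)\to F(P)\to F(A')\dashrightarrow$ in $\B$ with $F(P)$ projective. Now apply the contravariant long exact sequences of Lemma~\ref{lem-long} with respect to $Y$ in $\B$ and with respect to $G(Y)$ in $\A$, and connect them via the natural $\Hom$-adjunction isomorphism $\Hom_{\B}(F(-),Y)\cong\Hom_{\A}(-,G(Y))$. Since $F(P)$ and $P$ are projective, $\E_{\B}(F(P),Y)=0=\E_{\A}(P,G(Y))$, so the two long exact sequences reduce to
$$\Hom_{\B}(F(P),Y)\to\Hom_{\B}(F(A),Y)\to\E_{\B}(F(A),Y)\to 0,$$
$$\Hom_{\A}(P,G(Y))\to\Hom_{\A}(A,G(Y))\to\E_{\A}(A,G(Y))\to 0;$$
a diagram chase (using naturality of the adjunction isomorphism in the first variable) identifies the cokernels, yielding $\E_{\B}(F(A),Y)\cong\E_{\A}(A,G(Y))$, and one checks this isomorphism is natural. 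Hypothesis (2) is handled dually, using enough injectives, cosyzygies, $\E^{i}(A,B)\simeq\E(A,\Sigma^{i-1}B)$, the fact that the exact functor $G$ preserves injectives and hence commutes with cosyzygies, and the covariant long exact sequences of Lemma~\ref{lem-long}.

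The main obstacle I anticipate is the bookkeeping around syzygies: $\Omega^{i-1}X$ is only well-defined up to the choice of projective resolution (it is a subcategory, not an object), so the identification $F(\Omega^{i-1}X)\subseteq\Omega^{i-1}_{\B}F(X)$ must be set up carefully, and one should verify that the resulting isomorphism $\E^{i}_{\B}(F(X),Y)\cong\E^{i}_{\A}(X,G(Y))$ does not depend on these choices — equivalently, that it is compatible with the connecting maps, so that an inductive argument on $i$ using the long exact sequences goes through cleanly. This is essentially a naturality/well-definedness check rather than a conceptual difficulty, but it is where the care is needed; everything else is a routine application of adjunction together with Lemma~\ref{lem-long}.
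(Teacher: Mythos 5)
The paper itself gives no proof of Lemma~\ref{E-xt}: it is quoted directly from \cite[Lemma 4.3]{MZ25D}, so there is no in-paper argument to compare against. Your proposal is the standard and correct argument for this kind of statement: reduce to $i=1$ via the syzygy description $\E^{i}(A,B)\simeq\E(\Omega^{i-1}A,B)$, using that an exact, projective-preserving $F$ carries a syzygy triangle $\Omega A\to P\to A\dashrightarrow$ to a syzygy triangle in $\B$, and then identify $\E_{\B}(F(-),Y)$ and $\E_{\A}(-,G(Y))$ as cokernels of a common map of $\Hom$-groups via naturality of the adjunction isomorphism; case (2) is dual with cosyzygies. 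This matches the argument one finds in \cite{MZ25D}.

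One indexing slip is worth fixing: from the $\E$-triangle $A\to P\to A'\dashrightarrow$ with $P$ projective, the contravariant exact sequence of Lemma~\ref{lem-long} presents $\E_{\A}(A',G(Y))$, not $\E_{\A}(A,G(Y))$, as the cokernel of $\Hom_{\A}(P,G(Y))\to\Hom_{\A}(A,G(Y))$ (and likewise in $\B$), so both of your displayed sequences should end in
$$\E_{\B}(F(A'),Y)\to 0\quad\text{and}\quad\E_{\A}(A',G(Y))\to 0,$$
and the base case you obtain is $\E_{\B}(F(A'),Y)\cong\E_{\A}(A',G(Y))$ for the arbitrary object $A'$. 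Since every object of $\A$ occurs as such an $A'$ by the enough-projectives hypothesis, this is exactly what is needed, so the slip is cosmetic rather than a gap. Your closing remark about well-definedness of $\Omega^{i-1}$ is the right point of care, and it is handled by the results of \cite{Liu} establishing that the higher extension groups are independent of the chosen syzygies.
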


Combing Lemmas \ref{lem-rec} and \ref{E-xt}, we have the following result immediately.

\begin{proposition}{\rm (cf. \cite[Lemma 2.5]{HH25n})}\label{E-xt-1}
    Let $(\A, \B, \C)$ be a recollement of extriangulated categories. If $i^{*}$ and $i^!$ are exact, then for any $A\in \A$, $B\in \B$, $C\in \C$ and any integer $i\geq 1$, we have
   \begin{align*}
       &
   \E_{\A}^{i}(i^{*}B,A)\cong \E_{\B}^{i}(B,i_{*}A),\ \E_{\B}^{i}(i_{*}A,B)\cong \E_{\A}^{i}(A,i^{!}B),\\
  &
  \E_{\B}^{i}(j_{!}C,B)\cong \E_{\C}^{i}(C,j^{*}B),
   \ \E_{\C}^{i}(j^{*}B,C)\cong \E_{\B}^{i}(B,j_{*}C).
   \end{align*}
\end{proposition}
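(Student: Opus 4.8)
The plan is to reduce everything to Lemma~\ref{E-xt} by unwinding the definition of a recollement and invoking the relevant parts of Lemma~\ref{lem-rec}. The key observation is that all four isomorphisms have exactly the form appearing in Lemma~\ref{E-xt}: each is an adjunction $\mathbb{E}^i_{\B}(F(X),Y)\cong\mathbb{E}^i_{\A}(X,G(Y))$ (or its opposite) for an appropriate pair of adjoint functors drawn from the recollement data. So the real content is to check, for each of the four cases, that one of the two hypotheses of Lemma~\ref{E-xt} holds.

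First I would treat $\E_{\A}^{i}(i^{*}B,A)\cong \E_{\B}^{i}(B,i_{*}A)$. Here $i^{*}$ is right exact by definition of a recollement, and it is assumed exact, so the relevant adjoint pair is $F=i^{*}$ with right adjoint $G=i_{*}$; by Lemma~\ref{lem-rec}(3), $i^{*}$ preserves projective objects, so hypothesis (1) of Lemma~\ref{E-xt} applies and gives the isomorphism. Next, $\E_{\B}^{i}(i_{*}A,B)\cong \E_{\A}^{i}(A,i^{!}B)$: now $F=i_{*}$ with right adjoint $G=i^{!}$; since $i^{!}$ is assumed exact and preserves injective objects by Lemma~\ref{lem-rec}(3), hypothesis (2) of Lemma~\ref{E-xt} applies. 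For $\E_{\B}^{i}(j_{!}C,B)\cong \E_{\C}^{i}(C,j^{*}B)$, take $F=j_{!}$ with right adjoint $G=j^{*}$; here $j^{*}$ is exact by definition of a recollement (it is one of the two exact functors in the datum) and it preserves injective objects because, by Lemma~\ref{lem-rec}(5'), exactness of $i^{*}$ forces $j_!$ to be exact, whence Lemma~\ref{lem-rec}(4') applies to $j^{*}$; so hypothesis (2) holds again. Finally, $\E_{\C}^{i}(j^{*}B,C)\cong \E_{\B}^{i}(B,j_{*}C)$: take $F=j^{*}$ with right adjoint $G=j_{*}$; here $j^{*}$ is exact and preserves projective objects, the latter because exactness of $i^{*}$ implies $j_!$ is exact (Lemma~\ref{lem-rec}(5')) and dually, or more directly because exactness of $i^{!}$ is not available—so one instead argues that $j_{\ast}$ preserves injectives (Lemma~\ref{lem-rec}(3')) and uses that $j^*$ is exact to invoke hypothesis (1) via the companion statement—I would double-check which of Lemma~\ref{lem-rec}(4)/(4') is the one that literally applies and cite it.

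The main obstacle I anticipate is precisely this last bookkeeping point: matching each of the four isomorphisms to the correct half (1) or (2) of Lemma~\ref{E-xt}, and making sure the ``preserves projectives/injectives'' claim one needs is genuinely one of the statements recorded in Lemma~\ref{lem-rec} under the standing hypothesis that $i^{*}$ and $i^{!}$ are exact (note that in a recollement, exactness of $i^*$ forces exactness of $j_!$ by Lemma~\ref{lem-rec}(5'), and exactness of $i^!$ forces exactness of $j_*$ by Lemma~\ref{lem-rec}(5), so all four functors $i^*, i^!, j_!, j_*$ in sight are exact, which is what makes every case go through). Once the correct citation is pinned down in each case, the proof is a one-line application of Lemma~\ref{E-xt}; there is no further computation to do, since the higher extension groups $\mathbb{E}^i$ are defined via syzygies and cosyzygies and Lemma~\ref{E-xt} already packages the dimension-shifting argument. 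I would therefore present the proof as four short paragraphs, one per isomorphism, each naming the adjoint pair $(F,G)$, citing the exactness of the relevant functor from the recollement axioms together with Lemma~\ref{lem-rec}, and concluding by Lemma~\ref{E-xt}.
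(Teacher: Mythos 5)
Your proposal is correct and takes essentially the same route as the paper, whose entire proof is the remark that the result follows immediately by combining Lemma~\ref{lem-rec} with Lemma~\ref{E-xt}; your case-by-case matching of each adjoint pair to hypothesis (1) or (2) of Lemma~\ref{E-xt} is exactly the intended bookkeeping. The only slip is in your fourth case, where you say exactness of $i^{!}$ is ``not available'' --- it is a standing hypothesis of the proposition, so $j_{*}$ is exact by Lemma~\ref{lem-rec}(5) and preserves injectives by Lemma~\ref{lem-rec}(3'), and hypothesis (2) of Lemma~\ref{E-xt} applies directly, as you in fact observe correctly in your closing paragraph.
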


As a similar argument to that of \cite[Lemmas 3.1 and 3.2]{LW10F} and \cite[Lemmas 3.5 and 3.6]{MXZ22S}, we have the following result, its proof is omitted here.

\begin{lemma}\label{lem-con-cov}
Let $\xymatrix@C=15pt{j^{*}:\mathcal{B}\ar[r]&\mathcal{C}}$ be an additive functor between extriangulated categories $\mathcal{B}$ and $\mathcal{C}$.
Then we have the following statements.
\begin{itemize}
    \item[\rm (1)] If $j^{*}$ has a left adjoint $j_{!}$ and $\mathcal{Y}$ is a covariantly finite subcategory of $\mathcal{B}$, then $j^{*}\mathcal{X}$ is covariantly finite in $\mathcal{C}$.
    \item[\rm (2)] If $j^{*}$ has a right adjoint $j_{*}$ and $\mathcal{X}$ is a contravariantly finite subcategory of $\mathcal{B}$, then $j^{*}\mathcal{X}$ is contravariantly finite in $\mathcal{C}$.
\end{itemize}
\end{lemma}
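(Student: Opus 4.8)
The plan is to prove each of the two statements by directly unwinding the definition of a (co)variantly finite subcategory and exploiting the adjunction, exactly in the spirit of \cite[Lemmas~3.1 and 3.2]{LW10F}. I will only sketch part (1), since part (2) is completely dual (replacing the left adjoint $j_{!}$ by the right adjoint $j_{*}$, covariant by contravariant, and reversing all arrows).

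For part (1), fix an object $C \in \mathcal{C}$; I must produce a left $j^{*}\mathcal{X}$-approximation of $C$. Apply the left adjoint to obtain $j_{!}C \in \mathcal{B}$, and use that $\mathcal{X}$ is covariantly finite in $\mathcal{B}$ to choose a left $\mathcal{X}$-approximation $g \colon j_{!}C \to X$ with $X \in \mathcal{X}$. (Here one should write $\mathcal{X}$ in the statement of (1); the excerpt's ``$\mathcal{Y}$'' appears to be a typo for the subcategory being approximated.) Now consider the composite
\[
C \xrightarrow{\ \eta_{C}\ } j^{*}j_{!}C \xrightarrow{\ j^{*}(g)\ } j^{*}X,
\]
where $\eta \colon \Id_{\mathcal{C}} \Rightarrow j^{*}j_{!}$ is the unit of the adjunction $(j_{!}, j^{*})$. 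I claim this composite is a left $j^{*}\mathcal{X}$-approximation of $C$. Indeed, given any $X' \in \mathcal{X}$ and any morphism $h \colon C \to j^{*}X'$ in $\mathcal{C}$, the adjunction bijection $\Hom_{\mathcal{C}}(C, j^{*}X') \cong \Hom_{\mathcal{B}}(j_{!}C, X')$ sends $h$ to some $\tilde{h} \colon j_{!}C \to X'$; since $g$ is a left $\mathcal{X}$-approximation and $X' \in \mathcal{X}$, there is $s \colon X \to X'$ with $\tilde{h} = s g$; applying $j^{*}$ and precomposing with $\eta_{C}$, naturality of $\eta$ together with the triangle identities yields $h = j^{*}(s) \circ (j^{*}(g) \circ \eta_{C})$, which is the desired factorization. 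Hence every $C \in \mathcal{C}$ admits a left $j^{*}\mathcal{X}$-approximation, so $j^{*}\mathcal{X}$ is covariantly finite in $\mathcal{C}$.

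There is really no serious obstacle here; the argument is a formal consequence of the adjunction and the triangle identities, and requires no extriangulated structure beyond additivity of $j^{*}$. The one point that deserves a little care is keeping track of which unit/counit is being used and verifying the factorization via the naturality square for $\eta$ rather than just asserting it; this is the only place where a slip could occur. For part (2) one runs the mirror-image argument with the counit $\varepsilon \colon j^{*}j_{*} \Rightarrow \Id_{\mathcal{C}}$: given $C \in \mathcal{C}$, pick a right $\mathcal{X}$-approximation $f \colon X \to j_{*}C$ in $\mathcal{B}$ and check that $j^{*}X \xrightarrow{j^{*}(f)} j^{*}j_{*}C \xrightarrow{\varepsilon_{C}} C$ is a right $j^{*}\mathcal{X}$-approximation, using the adjunction bijection $\Hom_{\mathcal{C}}(j^{*}X', C) \cong \Hom_{\mathcal{B}}(X', j_{*}C)$. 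This is why the authors are comfortable omitting the proof.
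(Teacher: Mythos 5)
Your argument is correct and is exactly the standard adjunction argument the paper has in mind when it omits the proof with a pointer to \cite[Lemmas~3.1 and 3.2]{LW10F}: the composite $C \xrightarrow{\eta_C} j^{*}j_{!}C \xrightarrow{j^{*}(g)} j^{*}X$ (resp.\ its dual with the counit) together with the adjunction bijection gives the required approximation. You are also right that the ``$\mathcal{Y}$'' in part (1) is a typo --- the hypothesis and the conclusion should name the same subcategory.
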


Before presenting the main result, we recall that in a recollement $(\mathcal{A},\mathcal{B},\mathcal{C})$ of extriangulated categories, the interplay between substructures in $\mathcal{A}$ and $\mathcal{C}$ often determines corresponding structures in the middle category $\mathcal{B}$. Motivated by this observation, we aim to construct an $n$-cotorsion pair in $\mathcal{B}$ from given $n$-cotorsion pairs in $\mathcal{A}$ and $\mathcal{C}$.
The following theorem establishes such a construction and shows its compatibility with the original $n$-cotorsion pairs in the outer terms of the recollement.

\begin{theorem}\label{main-thm1}
    Let $(\A, \B, \C)$ be a recollement of extriangulated categories, and let $(\mathcal{X'},\mathcal{Y'})$ and $(\mathcal{X''},\mathcal{Y''})$ be $n$-cotorsion pairs in $\A$ and $\C$ respectively. Set
    \begin{align*}
        \mathcal{X}=&\{B\in\B\mid j^{*}B\in\mathcal{X''},\ i^{*}B\in \mathcal{X'}\},\\
       \mathcal{Y}=&\{B\in\B\mid j^{*}B\in\mathcal{Y''},\ i^{!}B\in \mathcal{Y'}\}.
    \end{align*}
    If $i^{*}$ and $i^!$ are exact, then
    \begin{itemize}
    \item[\rm (1)]
    $i_{*}\mathcal{X'}\subseteq\mathcal{X}$, $i_{*}\mathcal{Y'}\subseteq\mathcal{Y}$, $j_{!}\mathcal{X''}\subseteq\mathcal{X}$, $j_{*}\mathcal{Y''}\subseteq\mathcal{Y}$.
        \item[\rm (2)] $(\mathcal{X},\mathcal{Y})$ is an $n$-cotorsion pair in $\B$.
        \item[\rm (3)] $(\mathcal{X'},\mathcal{Y'})=(i^{*}\mathcal{X},i^{!}\mathcal{Y})$ and $(\mathcal{X''},\mathcal{Y''})=(j^{*}\mathcal{X},j^{*}\mathcal{Y})$.
    \end{itemize}
\end{theorem}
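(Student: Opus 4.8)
The plan is to verify the three conditions of Definition~\ref{def-n-cotor} for the pair $(\mathcal{X},\mathcal{Y})$ in $\mathcal{B}$, using the exactness of $i^*$ and $i^!$ to move $\mathbb{E}$-groups between the three categories via Proposition~\ref{E-xt-1}. Throughout, I will exploit the two $\mathbb{E}$-triangles of Lemma~\ref{lem-rec}(6) and (6'), namely
$$\xymatrix@C=15pt{i_*i^!B\ar[r]^-{\theta_B}&B\ar[r]^-{\vartheta_B}&j_*j^*B\ar@{-->}[r]&}\quad\text{and}\quad\xymatrix@C=15pt{j_!j^*B\ar[r]^-{\upsilon_B}&B\ar[r]^-{\nu_B}&i_*i^*B\ar@{-->}[r]&},$$
which are available precisely because $i^!$ (resp.\ $i^*$) is exact; these let me ``resolve'' any $B\in\mathcal{B}$ by objects coming from $\mathcal{A}$ and $\mathcal{C}$.

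\textbf{Step 1 (Part (1)).} For $A'\in\mathcal{X'}$, I must show $j^*i_*A'\in\mathcal{X''}$ and $i^*i_*A'\in\mathcal{X'}$. The first holds because $j^*i_*=0$ (so $j^*i_*A'=0\in\mathcal{X''}$ since $\mathcal{X''}$ contains the projectives, in particular $0$), and the second because $i^*i_*\cong\Id_{\mathcal{A}}$ by Lemma~\ref{lem-rec}(1), so $i^*i_*A'\cong A'\in\mathcal{X'}$. For $C''\in\mathcal{X''}$, I need $j^*j_!C''\in\mathcal{X''}$ (true since $j^*j_!\cong\Id_{\mathcal{C}}$) and $i^*j_!C''\in\mathcal{X'}$ (true since $i^*j_!=0$ by Lemma~\ref{lem-rec}(2)). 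The inclusions $i_*\mathcal{Y'}\subseteq\mathcal{Y}$ and $j_*\mathcal{Y''}\subseteq\mathcal{Y}$ are dual: use $j^*i_*=0$, $i^!i_*\cong\Id_{\mathcal{A}}$, $j^*j_*\cong\Id_{\mathcal{C}}$, and $i^!j_*=0$.

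\textbf{Step 2 (Part (2), the orthogonality equalities).} I first check $\mathbb{E}^k_{\mathcal{B}}(\mathcal{X},\mathcal{Y})=0$ for $1\le k\le n$, hence $\mathcal{X}\subseteq\bigcap_k{}^{\perp_k}\mathcal{Y}$ and $\mathcal{Y}\subseteq\bigcap_k\mathcal{X}^{\perp_k}$. Given $X\in\mathcal{X}$, $Y\in\mathcal{Y}$, apply $\mathbb{E}^k_{\mathcal{B}}(X,-)$ to the triangle $i_*i^!Y\to Y\to j_*j^*Y\dashrightarrow$; by Lemma~\ref{lem-long} it suffices to kill $\mathbb{E}^k_{\mathcal{B}}(X,i_*i^!Y)$ and $\mathbb{E}^k_{\mathcal{B}}(X,j_*j^*Y)$. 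By Proposition~\ref{E-xt-1} the first is $\mathbb{E}^k_{\mathcal{A}}(i^*X,i^!Y)=0$ since $i^*X\in\mathcal{X'}$, $i^!Y\in\mathcal{Y'}$, and $(\mathcal{X'},\mathcal{Y'})$ is an $n$-cotorsion pair; the second is $\mathbb{E}^k_{\mathcal{C}}(j^*X,j^*Y)=0$ similarly. For the reverse inclusions I must show $\bigcap_k{}^{\perp_k}\mathcal{Y}\subseteq\mathcal{X}$ and $\bigcap_k\mathcal{X}^{\perp_k}\subseteq\mathcal{Y}$. Suppose $B\in\bigcap_k{}^{\perp_k}\mathcal{Y}$. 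For every $A''\in\mathcal{Y'}$ and $1\le k\le n$, by Part (1) $i_*A''\in\mathcal{Y}$, so $\mathbb{E}^k_{\mathcal{B}}(B,i_*A'')=0$; by Proposition~\ref{E-xt-1} this equals $\mathbb{E}^k_{\mathcal{A}}(i^*B,A'')$, whence $i^*B\in\bigcap_k{}^{\perp_k}\mathcal{Y'}=\mathcal{X'}$. Similarly, for $C''\in\mathcal{Y''}$ we have $j_*C''\in\mathcal{Y}$, so $0=\mathbb{E}^k_{\mathcal{B}}(B,j_*C'')\cong\mathbb{E}^k_{\mathcal{C}}(j^*B,C'')$, giving $j^*B\in\bigcap_k{}^{\perp_k}\mathcal{Y''}=\mathcal{X''}$; thus $B\in\mathcal{X}$. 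The inclusion $\bigcap_k\mathcal{X}^{\perp_k}\subseteq\mathcal{Y}$ is the same argument with the roles of $\mathcal{X}$-objects $i_*\mathcal{X'}$, $j_!\mathcal{X''}$ and the adjunction isomorphisms $\mathbb{E}^k_{\mathcal{B}}(i_*A',B)\cong\mathbb{E}^k_{\mathcal{A}}(A',i^!B)$, $\mathbb{E}^k_{\mathcal{B}}(j_!C'',B)\cong\mathbb{E}^k_{\mathcal{C}}(C'',j^*B)$. I also need $\mathcal{X},\mathcal{Y}$ closed under direct summands, which is immediate from the additivity of $i^*,i^!,j^*$ and the closure of $\mathcal{X'},\mathcal{Y'},\mathcal{X''},\mathcal{Y''}$ under summands.

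\textbf{Step 3 (Part (2), the approximation conditions).} This is the step I expect to be the main obstacle. I must build, for each $B\in\mathcal{B}$, an $\mathbb{E}$-triangle $Y\to X\to B\dashrightarrow$ with $X\in\mathcal{X}$, $Y\in\mathcal{Y}$ (and the dual one). The strategy is a two-stage gluing. First, take an $n$-cotorsion resolution of $j^*B$ in $\mathcal{C}$: an $\mathbb{E}$-triangle $\widetilde Y\to\widetilde X\to j^*B\dashrightarrow$ with $\widetilde X\in\mathcal{X''}$, $\widetilde Y\in\mathcal{Y''}$; apply $j_!$ and use the triangle $j_!j^*B\to B\to i_*i^*B\dashrightarrow$ together with an octahedral-type axiom (ET4 in the extriangulated setting) to assemble a triangle over $B$ whose ``$\mathcal{X}$-end'' is built from $j_!\widetilde X$ (which lies in $\mathcal{X}$ by Part (1)) and from $i_*i^*B$, reducing the problem to resolving $i^*B\in\mathcal{A}$. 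Then resolve $i^*B$ by the $n$-cotorsion pair $(\mathcal{X'},\mathcal{Y'})$, apply $i_*$, and splice again. Tracking that the resulting middle term lands in $\mathcal{X}$ requires checking both $j^*$ and $i^*$ of it; here one uses exactness of $i^*$ (so $i^*$ sends the $j_!$-triangle to a split-type situation since $i^*j_!=0$) and that $j^*i_*=0$ to see the contributions from $\mathcal{A}$ vanish after applying $j^*$. The covariant/contravariant finiteness of $\mathcal{X},\mathcal{Y}$ then follows either from the existence of these triangles directly or by invoking Lemma~\ref{lem-con-cov} applied to $j^*$ and to $i^*,i^!$. The delicate bookkeeping is making the two splices compatible so that the final outer terms simultaneously satisfy the $j^*$- and $i^*$-conditions; I would organize this as a separate lemma before the theorem if it becomes long.

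\textbf{Step 4 (Part (3)).} Finally, $i^*\mathcal{X}\subseteq\mathcal{X'}$ and $j^*\mathcal{X}\subseteq\mathcal{X''}$ hold by the very definition of $\mathcal{X}$, while the reverse inclusions $\mathcal{X'}\subseteq i^*\mathcal{X}$, $\mathcal{X''}\subseteq j^*\mathcal{X}$ follow from Part (1): for $A'\in\mathcal{X'}$ we have $i_*A'\in\mathcal{X}$ and $i^*i_*A'\cong A'$, and for $C''\in\mathcal{X''}$ we have $j_!C''\in\mathcal{X}$ and $j^*j_!C''\cong C''$. The equalities $i^!\mathcal{Y}=\mathcal{Y'}$ and $j^*\mathcal{Y}=\mathcal{Y''}$ are proved the same way using $i_*\mathcal{Y'}\subseteq\mathcal{Y}$, $j_*\mathcal{Y''}\subseteq\mathcal{Y}$ and the isomorphisms $i^!i_*\cong\Id_{\mathcal{A}}$, $j^*j_*\cong\Id_{\mathcal{C}}$.
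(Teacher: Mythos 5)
Your Steps 1, 2 and 4 are correct and coincide with the paper's argument (the paper applies $\Hom_{\mathcal B}(-,Y)$ to the triangle $j_!j^*X\to X\to i_*i^*X\dashrightarrow$ rather than $\mathbb{E}^k(X,-)$ to the dual one, but this is immaterial). The problem is Step 3, which you rightly flag as the main obstacle but do not actually close; as written it has two genuine gaps. First, Definition~\ref{def-n-cotor} does \emph{not} provide the ``resolutions'' $\widetilde Y\to\widetilde X\to j^*B\dashrightarrow$ with $\widetilde X\in\mathcal{X''}$, $\widetilde Y\in\mathcal{Y''}$ that your splicing starts from: for $n\geq 2$ an $n$-cotorsion pair is defined only by the orthogonality equalities and the finiteness conditions, and the cocone of a right $\mathcal{X''}$-approximation is only killed by $\mathbb{E}^1(\mathcal{X''},-)$, not by all $\mathbb{E}^k$ for $k\leq n$, so it need not lie in $\mathcal{Y''}$ (this is exactly the difference from Definition~\ref{DefCotors}). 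Second, even granting such a triangle, applying $j_!$ puts $j_!\widetilde Y$ at the ``$\mathcal{Y}$-end'' of your spliced triangle, and $j_!\mathcal{Y''}\not\subseteq\mathcal{Y}$ in general --- part (1) gives $j_*\mathcal{Y''}\subseteq\mathcal{Y}$ but says nothing about $i^!j_!\widetilde Y$, so the outer term of your construction cannot be certified to lie in $\mathcal{Y}$. Your fallback, ``invoke Lemma~\ref{lem-con-cov},'' only yields that $i_*\mathcal{X'}$ and $j_!\mathcal{X''}$ are \emph{separately} contravariantly finite in $\mathcal{B}$; it does not produce a right approximation by the larger subcategory $\mathcal{X}$.

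What the paper actually does at this point is a direct approximation construction rather than a triangle-splicing one: given $B$, take a right $j_!\mathcal{X''}$-approximation $X_1\to B$, form the extension $X_1\to I\oplus B\to K\dashrightarrow$ through an injective $I$, take a right $i_*\mathcal{X'}$-approximation $X_2\to K$, and pull back to obtain an $\mathbb{E}$-triangle $X_1\to X\to X_2\dashrightarrow$ with $X\in\mathcal{X}$ (extension-closure) and a morphism $u\colon X\to B$. One then verifies $u$ is a right $\mathcal{X}$-approximation by decomposing an arbitrary $\tilde X\in\mathcal{X}$ via $j_!j^*\tilde X\to\tilde X\to i_*i^*\tilde X\dashrightarrow$ and lifting the two pieces through the two chosen approximations and the injectivity of $I$. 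If you want to salvage your plan, you should replace the splicing in Step 3 by this kind of two-step approximation argument (or prove as a separate lemma that the required triangles $Y\to X\to B\dashrightarrow$ exist, which is essentially equivalent and no easier).
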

\begin{proof}
    (1)
 Since $i^{*}i_{*}\Rightarrow \Id_{\A}$ by Lemma \ref{lem-rec},
$i^{*}i_{*}\mathcal{X'}\cong \mathcal{X'}$.
Notice that $j^{*}i_{*}\mathcal{X'}=0$, so $i_{*}\mathcal{X'}\subseteq\mathcal{X}$.
  Since $j^{*}j_{*}\Rightarrow\Id_{\C}$ by Lemma \ref{lem-rec},
$j^{*}j_{*}\mathcal{Y''}\cong \mathcal{Y''}$. Notice that $i^{!}j_{*}\mathcal{Y''}=0$ by Lemma \ref{lem-rec}, then $j_{*}\mathcal{Y''}\subseteq\mathcal{Y}$.
Similarly, we can get the assertions that $i_{*}\mathcal{Y'}\subseteq\mathcal{Y}$ and $j_{!}\mathcal{X''}\subseteq\mathcal{X}$.

(2)
    Since $(\X',\Y')$ and $(\X'',\Y'')$ are $n$-cotosion pairs in $\A$ and $\C$ respectively by assumption,
    $\X'$ and $\X''$ are contravariantly finite subcategories in $\A$ and $\C$ respectively, and $\Y'$ and $\Y''$ are covariantly finite
    subcategories in $\A$ and $\C$ respectively.
By Lemma \ref{lem-con-cov}, we know that $i_*\X'$ and $j_!\X''$ are contravariantly finite subcategories in $\B$.

Let $B\in \B$.
Then there exists a right $j_{!}\X''$-approximation $f: X_{1}\rightarrow B$ of $B$
with $X_1\in j_{!}\X''$.
Notice that $\B$ has enough injective objects, so there exists an $\E$-triangle
$$\xymatrix{X_{1}\ar[r]^{\phi}&I\ar[r]^{\psi}&K_{1}\ar@{-->}[r]&}$$
in $\B$ with $I\in \mathcal{I(B)}$.
By \cite[Proposition 1.20]{Liu}, we have the following commutative diagram of $\mathbb{E}_\mathcal{B}$-triangles
\begin{align*}
\xymatrix{X_{1}\ar[d]^{f}\ar[r]^{\phi}&I\ar[d]^{l}\ar[r]^{\psi}&K_{1}\ar@{-->}[r]\ar@{=}[d]&\\
B\ar[r]^{k} & K \ar[r]& K_1\ar@{-->}[r]^{} &}
\end{align*}
and an $\E_{\B}$-triangle $$\xymatrix{X_{1}\ar[r]^-{\phi\choose f}&I\oplus B\ar[r]^-{(-l\ k)}&K\ar@{-->}[r]&}.$$
Let $g: X_{2}\rightarrow K$ be a right $i_{*}\X'$-approximation of $K$ with $X_2\in i_{*}\X'$.
By the dual of \cite[Proposition 1.20]{Liu}, we get the following commutative diagram of $\E_{\B}$-triangles
\begin{align*}
\xymatrix{X_{1}\ar@{=}[d]\ar[r]&X\ar[d]^{u'\choose u}\ar[r]^{m}&X_{2}\ar@{-->}[r]\ar[d]^{g}&\\
X_1\ar[r]^-{\phi \choose f} & I\oplus B \ar[r]^-{(-l\ k)}& K\ar@{-->}[r]^{} &}
\end{align*}
and the following $\E$-triangle
\begin{align*}
    \xymatrix@C=30pt{X\ar[r]^-{\tiny\begin{pmatrix}
m\\ u'\\ u
\end{pmatrix}}&X_2\oplus I\oplus B\ar[r]^-{(g\ l\ -k)}&K\ar@{-->}[r]&}
\end{align*} 
in $\B$.
One can see that $\X$ is closed under $\E$-extensions.
Notice that $X_{1}, X_{2}\in \X$ by (1), so $X\in \X$.

Now we claim that $u: X\rightarrow B$ is a right $\X$-approximation of $B$.
Let $h: \tilde{X}\rightarrow B$ be any morphism with $\tilde{X}\in \X$.
Since $i^{*}$ is exact, there exists an $\E$-triangle
\begin{align*}
\xymatrix{j_{!}j^{*}\tilde{X}\ar[r]^-{\upsilon_{\tilde{X}}}&\tilde{X}\ar[r]^-{\nu_{\tilde{X}}}&i_{*}i^{*}\tilde{X}\ar@{-->}[r]&}
\end{align*}
in $\B$ by Lemma \ref{lem-rec}.
Notice that $j^*\tilde{X}\in \X''$ and $i^*\tilde{X}\in \X'$, so $j_!j^*\tilde{X}\in j_{!}\X''$ and  $i_{*}i^*\tilde{X}\in i_{*}\X'$.
Consider the morphism
$h\upsilon_{\tilde{X}} : j_!j^{*}\tilde{X}\rightarrow B$.
Since $f: X_{1}\rightarrow B$ is a right $j_{!}\X''$-approximation of $B$, there exists $p: j_{!}j^{*}\tilde{X}\rightarrow X_1$ such that $h\upsilon_{\tilde{X}}=fp$.
On the other hand, consider the morphism $\phi p: j_{!}j^{*}\tilde{X}\rightarrow I $, since $I\in \mathcal{I(B)}$, there exists a morphism $h': \tilde{X}\rightarrow I$ such that $\phi p=h'\upsilon_{\tilde{X}}$.
Then we obtain the following commutative diagram
$$\xymatrix{
j_{!}j^{*}\tilde{X}\ar[d]^{p}\ar[r]^-{\upsilon_{\tilde{X}}}&\tilde{X}\ar[r]^-{\nu_{\tilde{X}}}\ar[d]^{h'\choose h}&i_{*}i^{*}\tilde{X}\ar@{-->}[d]^{p''}\ar@{-->}[r]&\\
X_1\ar[r]^-{\phi \choose f}&I\oplus B\ar[r]^-{(-l\ k)}&K\ar@{-->}[r]&}.$$
Notice that $g: X_{2}\rightarrow K$ is a right $i_{*}\X'$-approximation of $K$ and $i_{*}i^*\tilde{X}\in i_{*}\X'$, there exists a morphism $g': i_{*}i^{*}\tilde{X}\rightarrow X_{2}$ such that $p''=gg'$.
It follows that
$$(g\ l\ -k){\tiny\begin{pmatrix}
g'{\nu_{\tilde{X}}} \\ h' \\ h
\end{pmatrix}}=gg'{\nu_{\tilde{X}}}+lh'-kh=p''{\nu_{\tilde{X}}}+lh'-kh=0,$$
where ${\tiny\begin{pmatrix}
g'{\nu_{\tilde{X}}} \\ h' \\ h
\end{pmatrix}}: \tilde{X}\rightarrow X_{2}\oplus I\oplus B$.
Then there exists $\tilde{h}: \tilde{X}\rightarrow X$ such that
$${\tiny\begin{pmatrix}
g'\nu_{\tilde{X}} \\ h' \\ h
\end{pmatrix}}={\tiny\begin{pmatrix}
m \\ u' \\ u
\end{pmatrix}}\tilde{h},$$
and so $h=u\tilde{h}$.
Thus $u: X\rightarrow B$ is a right $\X$-approximation of $B$, and so $\X$ is contravariantly finite in $\B$.

Dually, we can prove that $\Y$ is covariantly finite in $\B$.

Now we claim that
$\mathcal{X}=\bigcap\limits_{k=1}^{n}{^{\perp_k}\mathcal{Y}}$.
For any $X\in \mathcal{X}$, since $i^*$ is exact, there exists the following $\E$-triangle
$$\xymatrix{j_{!}j^{*}X\ar[r]&X\ar[r]&i_{*}i^{*}X\ar@{-->}[r]&}$$
in $\B$ by Lemma \ref{lem-rec}.
For any $Y\in \mathcal{Y}$ and $1\leq k\leq n$,
applying the functor $\Hom_{\B}(-,Y)$ to the above $\E$-triangle, we get the following exact sequence
$$\xymatrix{\cdots\ar[r]&\E_{\B}^{k}(i_{*}i^{*}X,Y)\ar[r]&\E^{k}_{\B}(X,Y)\ar[r]&\E^{k}_{\B}(j_{!}j^{*}X,Y)\ar[r]&\cdots}.$$
Since $i^{*}X\in\mathcal{X'}$, $j^{*}X\in \mathcal{X''}$, $j^{*}Y\in \mathcal{Y''}$ and $i^{!}Y\in\mathcal{Y'}$,
by Proposiion \ref{E-xt-1},
$$\E^{k}_{\B}(i_{*}i^{*}X,Y)\cong\E^{k}_{\A}(i^{*}X,i^{!}Y)=0$$
and
$$\E^{k}_{\B}(j_{!}j^{*}X,Y)\cong\E^{k}_{\C}(j^{*}X,j^{*}Y)=0.$$
It follows that $\E_{\B}^{i}(X,Y)=0$, then $\mathcal{X}\subseteq\bigcap\limits_{k=1}^{n} {^{\perp_{k}}\mathcal{Y}}$
On the other hand, assume that $X\in\bigcap\limits_{k=1}^{n} {^{\perp_{k}}\mathcal{Y}}$.
So $\E^{k}_{\B}(X,Y)=0$ for any $Y\in \mathcal{Y}$ and $1\leq k\leq n$.
Then for any $Y'\in \mathcal{Y'}$ and $Y''\in \mathcal{Y''}$,
we have $\E^{k}_{\mathcal{A}}(i^{*}X,Y')\cong \E^{k}_{\mathcal{B}}(X,i_{*}Y')=0$
and $\E^{k}_{\C}(j^{*}X,Y'')\cong \E^{k}_{\mathcal{B}}(X,j_{*}Y'')=0$
by Proposition \ref{E-xt-1} and (1).
Then $i^{*}X\in\bigcap\limits_{k=1}^{n}{^{\perp_{k}}\mathcal{Y'}}=\mathcal{X'}$ and $j^{*}X\in \bigcap\limits_{k=1}^{n}{^{\perp_{k}}\mathcal{Y''}}=\mathcal{X''}$, and so $X\in \mathcal{X}$.
It follows that $\bigcap\limits_{k=1}^{n} {^{\perp_{k}}\mathcal{Y}}\subseteq\mathcal{X}$.
Thus $\mathcal{X}=\bigcap\limits_{k=1}^{n} {^{\perp_{k}}\mathcal{Y}}$.

Similarly, we can prove that
$\mathcal{Y}=\bigcap\limits_{k=1}^{n}{\mathcal{X}^{\perp_{k}}}$.
Thus $(\mathcal{X},\mathcal{Y})$ is an $n$-cotorsion pair in $\B$.

    (3)
  It is clear that $i^{*}\mathcal{X}\subseteq\mathcal{X'}$.
    Since $i^{*}i_{*}\Rightarrow \Id_{\A}$ by Lemma \ref{lem-rec} and $i_{*}\mathcal{X'}\subseteq\mathcal{X}$ by (1),
    $\mathcal{X'}\cong i^{*}i_{*}\mathcal{X'}\subseteq i^{*}\mathcal{X}$.
Then $i^{*}\mathcal{X}=\mathcal{X'}$.
Similarly, we can prove the assertions that
$i^{!}\mathcal{Y}=\mathcal{Y'}$, $j^{*}\mathcal{X}=\mathcal{X''}$ and $j^{*}\mathcal{Y}= \mathcal{Y''}$.
\end{proof}

Applying Theorem~\ref{main-thm1} to triangulated categories yields the following result, which generalizes \cite[Theorem~3.1]{C13C}. In particular, when $n=1$, Corollary~\ref{main-1-1} recovers Theorem~3.1 of Chen~\cite{C13C}.

\begin{corollary}
\label{main-1-1}
Let $(\A, \B, \C)$ be a recollement of triangulated categories, and let $(\mathcal{X'},\mathcal{Y'})$ and $(\mathcal{X''},\mathcal{Y''})$ be $n$-cotorsion pairs in $\A$ and $\C$ respectively. Set
    \begin{align*}
        \mathcal{X}=&\{B\in\B\mid j^{*}B\in\mathcal{X''}, i^{*}B\in \mathcal{X'}\},\\
       \mathcal{Y}=&\{B\in\B\mid j^{*}B\in\mathcal{Y''}, i^{!}B\in \mathcal{Y'}\}.
    \end{align*}
    Then
    \begin{itemize}
    \item[\rm (1)]
    $i_{*}\mathcal{X'}\subseteq\mathcal{X}$, $i_{*}\mathcal{Y'}\subseteq\mathcal{Y}$, $j_{!}\mathcal{X''}\subseteq\mathcal{X}$, $j_{*}\mathcal{Y''}\subseteq\mathcal{Y}$.
        \item[\rm (2)] $(\mathcal{X},\mathcal{Y})$ is an $n$-cotorsion pair in $\B$.
        \item[\rm (3)] $(\mathcal{X'},\mathcal{Y'})=(i^{*}\mathcal{X},i^{!}\mathcal{Y})$ and $(\mathcal{X''},\mathcal{Y''})=(j^{*}\mathcal{X},j^{*}\mathcal{Y})$.
    \end{itemize}
\end{corollary}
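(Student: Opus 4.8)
The plan is to obtain this corollary as a direct consequence of Theorem~\ref{main-thm1}; the only point requiring verification is that the standing hypothesis of that theorem, namely the exactness of $i^{*}$ and $i^{!}$, is automatic in the triangulated setting.

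First I would recall the extriangulated structure carried by a triangulated category $(\mathcal{T},[1])$: one sets $\mathbb{E}(C,A)=\Hom_{\mathcal{T}}(C,A[1])$ and lets $\mathfrak{s}$ assign to $\delta\in\mathbb{E}(C,A)$ the equivalence class of the triangle $A\to B\to C\xrightarrow{\delta}A[1]$. Under this choice $\mathcal{P}(\mathcal{T})=\mathcal{I}(\mathcal{T})=0$, so $\mathcal{T}$ trivially has enough projectives and injectives; a short computation with syzygies gives $\Omega^{k}A\cong A[-k]$ and $\Sigma^{k}B\cong B[k]$, whence $\mathbb{E}^{k}(A,B)\cong\Hom_{\mathcal{T}}(A,B[k])$ for every $k\ge 1$. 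Moreover every triangulated category is weakly idempotent complete: if $A\to B$ is a split monomorphism, complete it to a triangle; its connecting morphism is annihilated by the shift of $A\to B$, which is again a split monomorphism and hence a monomorphism, so the connecting morphism vanishes, the triangle splits, and $A\to B$ acquires a cokernel. Thus Condition~\ref{WIC} holds. Finally, by Remark~\ref{remark-nc}(5), an $n$-cotorsion pair in a triangulated category in the sense of \cite{CZ25M} is precisely an $n$-cotorsion pair of the associated extriangulated category in the sense of Definition~\ref{def-n-cotor}, so $(\mathcal{X}',\mathcal{Y}')$ and $(\mathcal{X}'',\mathcal{Y}'')$ qualify as the input data for Theorem~\ref{main-thm1}.

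Next I would observe that a recollement of triangulated categories is a recollement of the associated extriangulated categories in the sense of Definition~\ref{def-rec}: the adjoint triples (R1), the full faithfulness of $i_{*},j_{!},j_{*}$ (R2), and $\Im i_{*}=\Ker j^{*}$ (R3) are literally the axioms of a triangulated recollement, while the $\mathbb{E}$-triangle sequences in (R4) and (R5) collapse to the canonical localization and colocalization triangles
$$i_{*}i^{!}B\to B\to j_{*}j^{*}B\to i_{*}i^{!}B[1],\qquad j_{!}j^{*}B\to B\to i_{*}i^{*}B\to j_{!}j^{*}B[1].$$
Since all six functors of a triangulated recollement are triangle functors, and a triangle functor is in particular an exact functor between the corresponding extriangulated categories, both $i^{*}$ and $i^{!}$ are exact. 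Hence Theorem~\ref{main-thm1} applies and delivers statements (1)--(3) verbatim.

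I do not foresee a genuine obstacle: all the substance lies in Theorem~\ref{main-thm1}, and what is left is the careful but routine translation of the triangulated recollement data into extriangulated language, together with the identification $\mathbb{E}^{k}(-,-)\cong\Hom(-,-[k])$ that renders the exactness hypotheses on $i^{*}$ and $i^{!}$ cost-free. Alternatively, (2) and (3) may be re-proved directly inside $\mathcal{B}$ by the same diagram chases used for Theorem~\ref{main-thm1}, using that $\mathcal{X}$ and $\mathcal{Y}$ are closed under the relevant triangles; but routing the argument through that theorem is the shortest route.
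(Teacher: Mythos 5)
Your proposal is correct and follows exactly the route the paper intends: the corollary is obtained by specializing Theorem~\ref{main-thm1} to the extriangulated structure $\mathbb{E}(C,A)=\Hom_{\mathcal{B}}(C,A[1])$ on a triangulated recollement, where all six functors are triangle functors and hence exact, so the hypotheses on $i^{*}$ and $i^{!}$ hold automatically. The paper states this without elaboration, and your careful verification of the standing hypotheses (WIC, enough projectives/injectives, $\mathbb{E}^{k}(-,-)\cong\Hom(-,-[k])$, and the compatibility of the $n$-cotorsion-pair notions via Remark~\ref{remark-nc}(5)) is exactly the routine translation that is being suppressed.
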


Applying Theorem \ref{main-thm1}
to $(n+1)$-cluster tilting subcategories, we get the following result.
\begin{corollary}\label{main-1-2}
       Let $(\A, \B, \C)$ be a recollement of extriangulated categories, and let $\mathcal{X'}$ and $\mathcal{X''}$ be $(n+1)$-cluster tilting subcategories in $\A$ and $\C$ respectively. Set
    \begin{align*}
        \mathcal{X}_1=&\{B\in\B\mid j^{*}B\in\mathcal{X''},\ i^{*}B\in \mathcal{X'}\},\\
       \mathcal{X}_2=&\{B\in\B\mid j^{*}B\in\mathcal{X''},\ i^{!}B\in \mathcal{X'}\}.
    \end{align*}
If $i^{*}$ and $i^!$ are exact, and $i^{*}j_{*}\mathcal{X''}\subseteq \mathcal{X'}$, $i^{!}j_{!}\mathcal{X''}\subseteq \mathcal{X'}$,
then
\begin{itemize}
\item[\rm (1)] $\mathcal{X}_1=\mathcal{X}_2$ is an $(n+1)$-cluster tilting subcategory in $\B$.
\item[\rm (2)] $\mathcal{X'}=i^{*}\mathcal{X}_1$ and $\mathcal{X''}=j^{*}\mathcal{X}_1$.
\end{itemize}

\end{corollary}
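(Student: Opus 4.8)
The plan is to deduce Corollary~\ref{main-1-2} directly from Theorem~\ref{main-thm1} together with Remark~\ref{remark-nc}(2), which identifies $(n+1)$-cluster tilting subcategories with the diagonal $n$-cotorsion pairs $(\mathcal{X},\mathcal{X})$. First I would apply Remark~\ref{remark-nc}(2) to $\mathcal{X'}$ and $\mathcal{X''}$ to view $(\mathcal{X'},\mathcal{X'})$ and $(\mathcal{X''},\mathcal{X''})$ as $n$-cotorsion pairs in $\mathcal{A}$ and $\mathcal{C}$. Then Theorem~\ref{main-thm1} produces an $n$-cotorsion pair $(\mathcal{X},\mathcal{Y})$ in $\mathcal{B}$, where by definition $\mathcal{X}=\mathcal{X}_1=\{B\in\mathcal{B}\mid j^{*}B\in\mathcal{X''},\ i^{*}B\in\mathcal{X'}\}$ and $\mathcal{Y}=\{B\in\mathcal{B}\mid j^{*}B\in\mathcal{X''},\ i^{!}B\in\mathcal{X'}\}=\mathcal{X}_2$. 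By Remark~\ref{remark-nc}(2) again, proving part~(1) amounts exactly to showing $\mathcal{X}_1=\mathcal{X}_2$; part~(2) is then immediate from Theorem~\ref{main-thm1}(3), since $\mathcal{X'}=i^{*}\mathcal{X}=i^{*}\mathcal{X}_1$ and $\mathcal{X''}=j^{*}\mathcal{X}=j^{*}\mathcal{X}_1$.

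So the whole content is the equality $\mathcal{X}_1=\mathcal{X}_2$, and the extra hypotheses $i^{*}j_{*}\mathcal{X''}\subseteq\mathcal{X'}$ and $i^{!}j_{!}\mathcal{X''}\subseteq\mathcal{X'}$ must be exactly what makes this work. To show $\mathcal{X}_1\subseteq\mathcal{X}_2$, take $B\in\mathcal{X}_1$, so $j^{*}B\in\mathcal{X''}$ and $i^{*}B\in\mathcal{X'}$; I must check $i^{!}B\in\mathcal{X'}$. Since $i^{!}$ is exact, Lemma~\ref{lem-rec}(6) gives the $\mathbb{E}$-triangle $i_{*}i^{!}B\to B\to j_{*}j^{*}B\dashrightarrow$ in $\mathcal{B}$; applying the exact functor $i^{!}$ and using $i^{!}i_{*}\cong\Id_{\mathcal{A}}$ (Lemma~\ref{lem-rec}(1)) yields an $\mathbb{E}$-triangle $i^{!}B\to i^{!}B\to i^{!}j_{*}j^{*}B\dashrightarrow$ in $\mathcal{A}$ — more precisely, I would instead use the right exact triangle (R5), $j_{!}j^{*}B\to B\to i_{*}i^{*}B\dashrightarrow$, apply $i^{!}$, and obtain an $\mathbb{E}$-triangle relating $i^{!}j_{!}j^{*}B$, $i^{!}B$ and $i^{!}i_{*}i^{*}B\cong i^{*}B$. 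Since $i^{!}j_{!}j^{*}B\in i^{!}j_{!}\mathcal{X''}\subseteq\mathcal{X'}$ and $i^{*}B\in\mathcal{X'}$, and $\mathcal{X'}$ is closed under $\mathbb{E}$-extensions (Remark~\ref{remark-nc}(1)) as well as under the relevant cocone operation since it is $(n+1)$-cluster tilting, I can conclude $i^{!}B\in\mathcal{X'}$. The reverse inclusion $\mathcal{X}_2\subseteq\mathcal{X}_1$ is dual: for $B\in\mathcal{X}_2$, apply the exact functor $i^{*}$ to the triangle from Lemma~\ref{lem-rec}(6), use $i^{*}i_{*}\cong\Id_{\mathcal{A}}$, $i^{*}j_{*}j^{*}B\in i^{*}j_{*}\mathcal{X''}\subseteq\mathcal{X'}$, $i^{!}B\in\mathcal{X'}$, and closure of $\mathcal{X'}$ under extensions to get $i^{*}B\in\mathcal{X'}$.

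The main obstacle I anticipate is bookkeeping around which functor is exact and which defining $\mathbb{E}$-triangle (the one from (R4)/Lemma~\ref{lem-rec}(6) versus the one from (R5)/Lemma~\ref{lem-rec}(6$'$)) to feed into which of $i^{*}$, $i^{!}$, so that after applying the exact functor the middle term becomes $i^{*}B$ or $i^{!}B$ and the two outer terms land in $\mathcal{X'}$ using precisely the hypotheses $i^{*}j_{*}\mathcal{X''}\subseteq\mathcal{X'}$ and $i^{!}j_{!}\mathcal{X''}\subseteq\mathcal{X'}$; one also needs to confirm that $i^{*},i^{!}$ genuinely send $\mathbb{E}$-triangles to $\mathbb{E}$-triangles (this is what "exact functor" means in Definition~\ref{def-rec}) and that $\mathcal{X'}$, being an $(n+1)$-cluster tilting subcategory, is closed not only under $\mathbb{E}$-extensions but under taking the third term in the relevant triangle — which follows since $\mathcal{X'}=\bigcap_{k=1}^{n}{}^{\perp_k}\mathcal{X'}=\bigcap_{k=1}^{n}\mathcal{X'}^{\perp_k}$ is closed under both cones and cocones by the long exact sequence of Lemma~\ref{lem-long}. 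Once the correct pairing is fixed, each inclusion is a two-line diagram chase.
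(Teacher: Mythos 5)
Your proposal is correct and follows essentially the same route as the paper: reduce to Theorem~\ref{main-thm1} via Remark~\ref{remark-nc}(2), then prove $\mathcal{X}_1=\mathcal{X}_2$ from the $\mathbb{E}$-triangle $j_!j^*B\to B\to i_*i^*B\dashrightarrow$ of Lemma~\ref{lem-rec}$(6')$ together with the hypotheses $i^!j_!\mathcal{X''}\subseteq\mathcal{X'}$ and $i^*j_*\mathcal{X''}\subseteq\mathcal{X'}$, with part (2) read off from Theorem~\ref{main-thm1}(3). The only difference is cosmetic --- you apply the exact functor $i^!$ (resp.\ $i^*$) to push that triangle into $\mathcal{A}$ and use extension-closure of $\mathcal{X'}$ there, whereas the paper stays in $\mathcal{B}$, checks that both end terms $j_!j^*B$ and $i_*i^*B$ lie in $\mathcal{X}_2$, and uses extension-closure of $\mathcal{X}_2$ --- and your side remark that $\mathcal{X'}$ is closed under cones and cocones is both unnecessary (only extension-closure, guaranteed by Remark~\ref{remark-nc}(1), is actually used) and not true in general, since the long exact sequence of Lemma~\ref{lem-long} leaves the boundary term $\mathbb{E}^{n+1}$ uncontrolled.
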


\begin{proof}
    (1)
    By Remark \ref{remark-nc}, we know that $(\mathcal{X'},\mathcal{X'})$ and $(\mathcal{X''},\mathcal{X''})$ are $(n+1)$-cotorsion pairs in $\A$ and $\C$ respectively.
Then
$(\mathcal{X}_1,\mathcal{X}_2)$ is an $n$-cotorsion pair in $\B$ by Theorem \ref{main-thm1}.

Let $X\in \mathcal{X}_1$. Since $i^*$ is exact by assumption, there exists the following $\E$-triangle
$$\xymatrix{j_{!}j^{*}X\ar[r]&X\ar[r]&i_{*}i^{*}X\ar@{-->}[r]&}$$
in $\B$ by Lemma \ref{lem-rec}.
Notice that $j^{*}X\in \mathcal{X''}$ and $i^{*}X\in \mathcal{X'}$.
Since
$i^{!}j_{!}\mathcal{X''}\subseteq\mathcal{X'}$ by assumption,
$i^{!}j_{!}j^{*}X\in i^{!}j_{!}\mathcal{X''}\subseteq \mathcal{X'}$.
Notice that $j^{*}i_{*}=0$, so $j^{*}i_{*}i^{*}X=0\in \mathcal{X''}$.
Since $i^{!}i_{*}\Rightarrow\Id_{\A}$ and $j^{*}j_{!}\Rightarrow \Id_{\C}$ by Lemma \ref{lem-rec}, $j^{*}j_{!}j^{*}X\cong j^{*}X\in \mathcal{X''}$ and $i^{!}i_{*}i^{*}X\cong i^{*}X\in \mathcal{X'}$.
So
$j_{!}j^{*}X\in \mathcal{X}_2$ and $i_{*}i^{*}X\in \mathcal{X}_2$.
Notice that $\mathcal{X}_2$ is closed under $\E$-extensions,
then $X\in \mathcal{X}_2$, and so $\mathcal{X}_1\subseteq \mathcal{X}_2$.
Similarly, we can prove that $\mathcal{X}_2\subseteq\mathcal{X}_1$.
Then $\mathcal{X}_1=\mathcal{X}_2$, and thus $\mathcal{X}_1=\mathcal{X}_2$ is an $(n+1)$-cluster tilting subcategory in $\B$ by Remark \ref{remark-nc}.

(2) This follows from Theorem \ref{main-thm1}(3).
\end{proof}

Specially, applying Corollary \ref{main-1-2} to triangulated categories, we get the result in \cite[Theorem 3.4]{LZZ24R}.
This gives a new proof of \cite[Theorem 3.4]{LZZ24R}.

\begin{corollary}{\rm (\cite[Theorem 3.4]{LZZ24R})}
\label{main-1-3}
     Let $(\A, \B, \C)$ be a recollement of triangulated categories, and let $\mathcal{X'}$ and $\mathcal{X''}$ be $(n+1)$-cluster tilting subcategories in $\A$ and $\C$ respectively. Set
    \begin{align*}
         \mathcal{X}_1=&\{B\in\B\mid j^{*}B\in\mathcal{X''},\ i^{*}B\in \mathcal{X'}\},\\
       \mathcal{X}_2=&\{B\in\B\mid j^{*}B\in\mathcal{X''},\ i^{!}B\in \mathcal{X'}\}.
    \end{align*}
If $i^{*}j_{*}\mathcal{X''}\subseteq \mathcal{X'}$ and $i^{!}j_{!}\mathcal{X''}\subseteq \mathcal{X'}$,
then
$\mathcal{X}_1=\mathcal{X}_2$ is an $(n+1)$-cluster tilting subcategory in $\B$.
\end{corollary}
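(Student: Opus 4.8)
The plan is to obtain Corollary~\ref{main-1-3} as an immediate specialization of Corollary~\ref{main-1-2}, so the bulk of the work consists of checking that the triangulated setting satisfies all the hypotheses of that corollary. The first step is to recall that every triangulated category $(\mathcal{T},\Sigma)$ carries its canonical extriangulated structure with $\mathbb{E}(C,A)=\Hom_{\mathcal{T}}(C,\Sigma A)$, for which the Liu--Nakaoka higher extension groups satisfy $\mathbb{E}^{k}(A,B)\cong\Hom_{\mathcal{T}}(A,\Sigma^{k}B)$; in particular the notion of $(n+1)$-cluster tilting subcategory used in \cite{LZZ24R} is exactly the one appearing in Corollary~\ref{main-1-2}. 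The second step is to note that in a recollement $(\mathcal{A},\mathcal{B},\mathcal{C})$ of triangulated categories each of the six functors $i^{*},i_{*},i^{!},j_{!},j^{*},j_{*}$ is a triangle functor, hence exact as a functor of extriangulated categories; thus the requirement ``$i^{*}$ and $i^{!}$ are exact'' in Corollary~\ref{main-1-2} is automatically met, and $\mathcal{B}$ has enough projectives and injectives in the extriangulated sense (with $\mathcal{P}(\mathcal{B})=\mathcal{I}(\mathcal{B})=\{0\}$, via the triangles $\Sigma^{-1}M\to 0\to M\dashrightarrow$ and $M\to 0\to\Sigma M\dashrightarrow$).

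With these identifications in place, the argument is purely formal: the only remaining hypotheses of Corollary~\ref{main-1-2}, namely $i^{*}j_{*}\mathcal{X''}\subseteq\mathcal{X'}$ and $i^{!}j_{!}\mathcal{X''}\subseteq\mathcal{X'}$, are precisely what is assumed in Corollary~\ref{main-1-3}. Hence Corollary~\ref{main-1-2} applies and gives $\mathcal{X}_{1}=\mathcal{X}_{2}$ together with the statement that this common subcategory is $(n+1)$-cluster tilting in $\mathcal{B}$. Should a self-contained proof be preferred, one instead repeats verbatim the proof of Corollary~\ref{main-1-2}: use Remark~\ref{remark-nc} to translate between $(n+1)$-cluster tilting subcategories and $(n+1)$-cotorsion pairs, apply Theorem~\ref{main-thm1} to produce the $n$-cotorsion pair $(\mathcal{X}_{1},\mathcal{X}_{2})$ in $\mathcal{B}$, and then, for $X\in\mathcal{X}_{1}$, exploit the triangle whose outer terms are $j_{!}j^{*}X$ and $i_{*}i^{*}X$ and whose middle term is $X$, noting $i^{!}j_{!}j^{*}X\in\mathcal{X'}$, $j^{*}i_{*}i^{*}X=0$, $j^{*}j_{!}j^{*}X\cong j^{*}X$ and $i^{!}i_{*}i^{*}X\cong i^{*}X$, so that both outer terms lie in $\mathcal{X}_{2}$; closure of $\mathcal{X}_{2}$ under $\mathbb{E}$-extensions then forces $X\in\mathcal{X}_{2}$, and symmetrically $\mathcal{X}_{2}\subseteq\mathcal{X}_{1}$.

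The only delicate point — and it is a minor one — is the compatibility bookkeeping of the first step: one must confirm that exactness of $i^{*}$ and $i^{!}$ in the sense of triangle functors coincides with exactness in the extriangulated sense (they send conflations to conflations and are compatible with the extension bifunctors), and that the higher $\mathbb{E}$-groups are computed so that the two definitions of $(n+1)$-cluster tilting genuinely agree. Once this is recorded, the corollary follows with no further computation, which is why we present it as a quick consequence of Corollary~\ref{main-1-2} rather than reproving it from scratch.
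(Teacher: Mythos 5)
Your proposal matches the paper's approach exactly: the paper derives Corollary~\ref{main-1-3} simply by specializing Corollary~\ref{main-1-2} to the canonical extriangulated structure on triangulated categories, where all six functors of the recollement are triangle functors and hence exact, so the hypotheses on $i^{*}$ and $i^{!}$ hold automatically. Your additional bookkeeping (identifying $\mathbb{E}^{k}(A,B)\cong\Hom(A,\Sigma^{k}B)$ and noting $\mathcal{P}(\mathcal{B})=\mathcal{I}(\mathcal{B})=\{0\}$) is correct and makes explicit what the paper leaves implicit.
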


\subsection{From $\B$ to $\A$ and $\C$}

%\begin{lemma}{\rm (\cite[Lemma 4.4]{MZ25D})}\label{lem-=}
%Let $(\A, \B, \C)$ be a recollement of extriangulated categories, and
%let $\X$ be a subcategory of $\mathcal{B}$. If $i_{*}i^{*}(\X) \subseteq \X$ and $i_{*}i^{!}(\X) \subseteq \X$, then $i^{*}(\X) =i^{!}(\X)$.
%\end{lemma}

\begin{lemma}\label{lem-=if}   %{\rm (cf. \cite[Lemma 4.5]{MZ25D})}
Let $(\A, \B, \C)$ be a recollement of extriangulated categories, and
let $(\X,\Y)$ be an $n$-cotorsion pair in $\B$. If $i^{*}$ and $i^!$ are exact, then we have
\begin{itemize}
\item[\rm (1)] $i_{*}i^{!}(\mathcal{Y})\subseteq \mathcal{Y}$ if and only if $i_{*}i^{*}(\mathcal{X})\subseteq \mathcal{X}$.
\item[\rm (2)] $j_{*}j^{*}(\mathcal{Y})\subseteq \mathcal{Y}$ if and only if $j_{!}j^{*}(\mathcal{X})\subseteq \mathcal{X}$.
\end{itemize}
\end{lemma}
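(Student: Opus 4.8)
The plan is to prove the two biconditionals by exploiting the adjunction isomorphisms of Proposition~\ref{E-xt-1} together with the defining orthogonality conditions of an $n$-cotorsion pair, namely $\X=\bigcap_{k=1}^{n}{}^{\perp_k}\Y$ and $\Y=\bigcap_{k=1}^{n}\X^{\perp_k}$. The key structural fact is that $i_*i^*(\X)\subseteq\X$ can be tested against $\Y$: an object $M\in\B$ lies in $\X$ precisely when $\E_\B^k(M,Y)=0$ for all $Y\in\Y$ and $1\le k\le n$, and likewise membership in $\Y$ is detected by vanishing of $\E_\B^k(X,-)$ over all $X\in\X$. So the whole argument reduces to translating these $\E$-vanishing conditions through the recollement functors.

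For part~(1), I would argue as follows. Assume $i_*i^*(\X)\subseteq\X$; I want $i_*i^!(Y)\in\Y$ for every $Y\in\Y$, i.e. $\E_\B^k(X,i_*i^!Y)=0$ for all $X\in\X$ and $1\le k\le n$. By Proposition~\ref{E-xt-1} we have $\E_\B^k(X,i_*i^!Y)\cong\E_\A^k(i^*X,i^!Y)$, and again by Proposition~\ref{E-xt-1} this is isomorphic to $\E_\B^k(i_*i^*X,Y)$. Since $i_*i^*X\in\X$ by hypothesis and $Y\in\Y$, this group vanishes because $(\X,\Y)$ is an $n$-cotorsion pair. Hence $i_*i^!Y\in\bigcap_{k=1}^{n}\X^{\perp_k}=\Y$. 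The converse direction is entirely dual: assuming $i_*i^!(\Y)\subseteq\Y$, for $X\in\X$ one computes $\E_\B^k(i_*i^*X,Y)\cong\E_\A^k(i^*X,i^!Y)\cong\E_\B^k(X,i_*i^!Y)=0$ for all $Y\in\Y$, whence $i_*i^*X\in{}^{\perp_k}\Y$ for each $k$, so $i_*i^*X\in\X$.

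For part~(2), the same strategy applies with the pair $(j_!,j^*,j_*)$ in place of $(i^*,i_*,i^!)$. Assuming $j_!j^*(\X)\subseteq\X$, for $Y\in\Y$ and $X\in\X$ I would write $\E_\B^k(X,j_*j^*Y)\cong\E_\C^k(j^*X,j^*Y)\cong\E_\B^k(j_!j^*X,Y)$, again using Proposition~\ref{E-xt-1} twice, and the last group is zero since $j_!j^*X\in\X$ and $Y\in\Y$; therefore $j_*j^*Y\in\Y$. Conversely, if $j_*j^*(\Y)\subseteq\Y$, then for $X\in\X$ and all $Y\in\Y$ we get $\E_\B^k(j_!j^*X,Y)\cong\E_\C^k(j^*X,j^*Y)\cong\E_\B^k(X,j_*j^*Y)=0$, so $j_!j^*X\in\bigcap_{k=1}^{n}{}^{\perp_k}\Y=\X$.

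The argument is essentially a double application of the adjunction isomorphisms, so there is no serious obstacle; the only point requiring care is checking that Proposition~\ref{E-xt-1} is applicable in both of its uses — this is exactly where the hypothesis that $i^*$ and $i^!$ are exact enters (it guarantees, via Lemma~\ref{lem-rec}, that the relevant functors preserve projectives/injectives and are exact, which are the conditions in Lemma~\ref{E-xt}). One should also note that the subcategories $\X$ and $\Y$ are closed under direct summands and that $i_*$, $j_!$, $j_*$ are additive, so the images $i_*i^!Y$, $i_*i^*X$, etc. are genuinely objects of $\B$ to which the orthogonality criteria apply; these are routine. I would present part~(1) in full and then simply remark that part~(2) follows verbatim after replacing $(i^*,i_*,i^!)$ by $(j_!,j^*,j_*)$ and $\A$ by $\C$.
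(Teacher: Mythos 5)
Your proposal is correct and follows exactly the paper's own argument: both rest on the two chains of isomorphisms $\E_\B^k(X,i_*i^!Y)\cong\E_\A^k(i^*X,i^!Y)\cong\E_\B^k(i_*i^*X,Y)$ and $\E_\B^k(X,j_*j^*Y)\cong\E_\C^k(j^*X,j^*Y)\cong\E_\B^k(j_!j^*X,Y)$ from Proposition~\ref{E-xt-1}, combined with the orthogonality characterizations $\X=\bigcap_{k=1}^{n}{}^{\perp_k}\Y$ and $\Y=\bigcap_{k=1}^{n}\X^{\perp_k}$. You merely spell out the two implications that the paper leaves as ``it follows that the assertions are true.''
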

\begin{proof}
For any $X \in \mathcal{X}$, $Y \in \mathcal{Y}$ and $1\leq k\leq n$, we have
$$
\begin{aligned}
& \E^{i}_\B(X,i_{*}i^{!}(Y))\cong \E^{i}_\A(i^{*}(X),i^{!}(Y))\cong  \E^{i}_\B(i_{*}i^{*}(X),Y)\\
& \E^{i}_\B(X,j_{*}j^{*}(Y))\cong \E^{i}_\C(j^{*}(X),j^{*}(Y))\cong  \E^{i}_\B(j_{!}j^{*}(X),Y)
\end{aligned}
$$
by Proposition \ref{E-xt-1}.
It follows that the assertions (1) and (2) are true.
\end{proof}

In contrast to Theorem~\ref{main-thm1}, which provides a method for constructing an $n$-cotorsion pair in the middle term $\mathcal{B}$ from those in the outer terms $\mathcal{A}$ and $\mathcal{C}$, the next result addresses the converse problem. Specifically, we investigate how an $n$-cotorsion pair in $\mathcal{B}$ can induce $n$-cotorsion pairs in $\mathcal{A}$ and $\mathcal{C}$ under suitable exactness and closure conditions. This converse construction is summarized in the following theorem.

\begin{theorem}\label{main-thm2}
Let $(\A, \B, \C)$ be a recollement of extriangulated categories, and let $(\mathcal{X},\mathcal{Y})$ be an $n$-cotorsion pair in $\B$. Assume that $i^{*}$ and $i^!$ are exact, then we have the following statements.
\begin{itemize}
\item[\rm (1)]  If $i_{*}i^{*}\mathcal{X}\subseteq\mathcal{X}$, $i_{*}i^{*}\mathcal{Y}\subseteq\mathcal{Y}$, then
$(i^{*}\mathcal{X},i^{!}\mathcal{Y})$ is an $n$-cotorsion pair in $\A$.
\item[\rm (2)] If $j_{*}j^{*}\mathcal{Y}\subseteq\mathcal{Y}$, then
$(j^{*}\mathcal{X},j^{*}\mathcal{Y})$ is an $n$-cotorsion pair in $\C$.
\item[\rm (3)] If $i_{*}i^{*}\mathcal{X}\subseteq\mathcal{X}$, $i_{*}i^{*}\mathcal{Y}\subseteq\mathcal{Y}$, and $j_{*}j^{*}\mathcal{Y}\subseteq\mathcal{Y}$, then
\begin{align*}
\mathcal{X}=&\{B\in \B\mid j^{*}B\in j^{*}\mathcal{X},\  i^{*}B\in i^{*}\mathcal{X}\},\\
\mathcal{Y}=&\{B\in \B\mid j^{*}B\in j^{*}\mathcal{Y},\  i^{!}B\in i^{!}\mathcal{Y}\}.
\end{align*}
\end{itemize}
\end{theorem}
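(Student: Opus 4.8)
The plan is to prove the three parts in order, using at every turn the adjunction isomorphisms of Proposition~\ref{E-xt-1}, the exactness of $i^{*},i^{!}$, the $\E$-triangles of Lemma~\ref{lem-rec}(6) and (6$'$), and the approximation transport of Lemma~\ref{lem-con-cov}; part (3) will then be a direct consequence of (1) and (2) together with Lemma~\ref{lem-=if}. For part (1) I would first check the orthogonality relations. Write $\mathcal{X}'=i^{*}\mathcal{X}$, $\mathcal{Y}'=i^{!}\mathcal{Y}$. To see $\mathcal{X}'=\bigcap_{k=1}^{n}{}^{\perp_k}\mathcal{Y}'$: for $X\in\mathcal{X}$ and $Y\in\mathcal{Y}$ one has $\E^{k}_{\A}(i^{*}X,i^{!}Y)\cong\E^{k}_{\B}(i_{*}i^{*}X,Y)=0$ since $i_{*}i^{*}X\in\mathcal{X}$ by hypothesis and $(\mathcal X,\mathcal Y)$ is an $n$-cotorsion pair; this gives $\mathcal{X}'\subseteq\bigcap_k{}^{\perp_k}\mathcal{Y}'$. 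Conversely, if $A\in\bigcap_k{}^{\perp_k}\mathcal{Y}'$, then for every $Y\in\mathcal{Y}$, $\E^{k}_{\B}(i_{*}A,Y)\cong\E^{k}_{\A}(A,i^{!}Y)=0$, so $i_{*}A\in\bigcap_k{}^{\perp_k}\mathcal{Y}=\mathcal{X}$, whence $A\cong i^{*}i_{*}A\in i^{*}\mathcal{X}=\mathcal{X}'$. The dual argument (using $i_{*}i^{*}\mathcal{Y}\subseteq\mathcal{Y}$, and the isomorphism $\E^{k}_{\B}(X,i_{*}i^{!}Y)\cong\E^{k}_{\A}(i^{*}X,i^{!}Y)$) gives $\mathcal{Y}'=\bigcap_k\mathcal{X}'^{\perp_k}$. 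For the finiteness conditions (3): $\mathcal{X}$ is contravariantly finite in $\B$, and $i^{*}$ has a left adjoint $i_{*}$ --- wait, $i^{*}$ has left adjoint $i_{!}$? No: the adjoint triple is $(i^{*},i_{*},i^{!})$, so $i^{*}$ has right adjoint $i_{*}$; by Lemma~\ref{lem-con-cov}(2), $i^{*}\mathcal{X}=\mathcal{X}'$ is contravariantly finite in $\A$. Dually $i^{!}$ has left adjoint $i_{*}$, so by Lemma~\ref{lem-con-cov}(1), $i^{!}\mathcal{Y}=\mathcal{Y}'$ is covariantly finite. Finally, closure under direct summands of $\mathcal{X}'$ and $\mathcal{Y}'$ follows because $i^{*},i^{!}$ are additive and dense with $i^{*}i_{*}\cong\Id$, $i^{!}i_{*}\cong\Id$, so a summand of $i^{*}X$ is of the form $i^{*}X''$ for a summand $X''$ of $i_{*}i^{*}X\in\mathcal X$.

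For part (2), set $\mathcal{X}''=j^{*}\mathcal{X}$, $\mathcal{Y}''=j^{*}\mathcal{Y}$. The finiteness is again handled by Lemma~\ref{lem-con-cov}: $j^{*}$ has right adjoint $j_{*}$, so $j^{*}\mathcal{X}$ is contravariantly finite; $j^{*}$ has left adjoint $j_{!}$, so $j^{*}\mathcal{Y}$ is covariantly finite. For orthogonality, the key isomorphisms from Proposition~\ref{E-xt-1} are $\E^{k}_{\C}(j^{*}X,C)\cong\E^{k}_{\B}(X,j_{*}C)$ and $\E^{k}_{\B}(j_{!}C,B)\cong\E^{k}_{\C}(C,j^{*}B)$. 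For $X\in\mathcal{X}$, $Y\in\mathcal{Y}$: $\E^{k}_{\C}(j^{*}X,j^{*}Y)\cong\E^{k}_{\B}(X,j_{*}j^{*}Y)=0$ because $j_{*}j^{*}Y\in\mathcal{Y}$ by the hypothesis $j_{*}j^{*}\mathcal{Y}\subseteq\mathcal{Y}$. Conversely, if $C\in\bigcap_k{}^{\perp_k}\mathcal{Y}''$ then for all $Y\in\mathcal{Y}$, $\E^{k}_{\B}(j_{!}C,Y)\cong\E^{k}_{\C}(C,j^{*}Y)=0$, so $j_{!}C\in\mathcal{X}$ and hence $C\cong j^{*}j_{!}C\in\mathcal{X}''$; this yields $\mathcal{X}''=\bigcap_k{}^{\perp_k}\mathcal{Y}''$. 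The identity $\mathcal{Y}''=\bigcap_k\mathcal{X}''^{\perp_k}$ is proved symmetrically, with the inclusion "$\supseteq$" using that if $C\in\bigcap_k\mathcal{X}''^{\perp_k}$ then $\E^{k}_{\B}(X,j_{*}C)\cong\E^{k}_{\C}(j^{*}X,C)=0$ for all $X\in\mathcal{X}$, so $j_{*}C\in\mathcal{Y}$ and $C\cong j^{*}j_{*}C\in\mathcal{Y}''$; closure under summands is as in part (1).

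For part (3), I would deduce it from the constructions. Under all three hypotheses, parts (1) and (2) give $n$-cotorsion pairs $(i^{*}\mathcal{X},i^{!}\mathcal{Y})$ in $\A$ and $(j^{*}\mathcal{X},j^{*}\mathcal{Y})$ in $\C$; moreover Lemma~\ref{lem-=if} converts the hypotheses $i_{*}i^{*}\mathcal{X}\subseteq\mathcal{X}$ and $j_{*}j^{*}\mathcal{Y}\subseteq\mathcal{Y}$ into $i_{*}i^{!}\mathcal{Y}\subseteq\mathcal{Y}$ and $j_{!}j^{*}\mathcal{X}\subseteq\mathcal{X}$. Now apply Theorem~\ref{main-thm1} to the pairs $(i^{*}\mathcal{X},i^{!}\mathcal{Y})$ and $(j^{*}\mathcal{X},j^{*}\mathcal{Y})$: it produces an $n$-cotorsion pair $(\widetilde{\mathcal{X}},\widetilde{\mathcal{Y}})$ in $\B$ with
\[
\widetilde{\mathcal{X}}=\{B\in\B\mid j^{*}B\in j^{*}\mathcal{X},\ i^{*}B\in i^{*}\mathcal{X}\},\qquad
\widetilde{\mathcal{Y}}=\{B\in\B\mid j^{*}B\in j^{*}\mathcal{Y},\ i^{!}B\in i^{!}\mathcal{Y}\}.
\]
It remains to show $(\widetilde{\mathcal{X}},\widetilde{\mathcal{Y}})=(\mathcal{X},\mathcal{Y})$. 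The inclusions $\mathcal{X}\subseteq\widetilde{\mathcal{X}}$, $\mathcal{Y}\subseteq\widetilde{\mathcal{Y}}$ are immediate from the definitions. For the reverse, I would argue that two $n$-cotorsion pairs in $\B$ with $\mathcal{X}\subseteq\widetilde{\mathcal{X}}$ must coincide: if $B\in\widetilde{\mathcal{X}}$, decompose $B$ via Lemma~\ref{lem-rec}(6$'$) into $j_{!}j^{*}B\to B\to i_{*}i^{*}B\dashrightarrow$; here $j^{*}B\in j^{*}\mathcal{X}$ forces $j_{!}j^{*}B\in j_{!}j^{*}\mathcal{X}\subseteq\mathcal{X}$ (using $j_{!}j^{*}\mathcal{X}\subseteq\mathcal{X}$ from Lemma~\ref{lem-=if}), and $i^{*}B\in i^{*}\mathcal{X}$ forces $i_{*}i^{*}B\in i_{*}i^{*}\mathcal{X}\subseteq\mathcal{X}$ by hypothesis; since $\mathcal{X}$ is closed under $\E$-extensions (Remark~\ref{remark-nc}(1)), $B\in\mathcal{X}$. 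Dually $\widetilde{\mathcal{Y}}\subseteq\mathcal{Y}$, using Lemma~\ref{lem-rec}(6) and $i_{*}i^{!}\mathcal{Y}\subseteq\mathcal{Y}$, $j_{*}j^{*}\mathcal{Y}\subseteq\mathcal{Y}$. The main obstacle is bookkeeping: making sure each adjunction isomorphism of Proposition~\ref{E-xt-1} is invoked with the correct functor and variance, and confirming that every auxiliary closure property (under $\E$-extensions, direct summands) needed to feed into Theorem~\ref{main-thm1} and Lemma~\ref{lem-=if} is genuinely available --- but no new idea beyond the quoted results is required.
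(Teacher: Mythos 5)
Your proposal is correct and follows essentially the same route as the paper: parts (1) and (2) use the adjunction isomorphisms of Proposition~\ref{E-xt-1} together with Lemmas~\ref{lem-con-cov} and~\ref{lem-=if} exactly as the authors do, and part (3) rests on the same decomposition triangle $j_!j^*B\to B\to i_*i^*B\dashrightarrow$. The only (harmless) variation is in (3), where you conclude via closure of $\mathcal{X}$ under $\E$-extensions (and an unnecessary detour through Theorem~\ref{main-thm1}) rather than the paper's direct long-exact-sequence computation of $\E^k_{\B}(X,Y)$ --- these are equivalent arguments.
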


\begin{proof}
Since $(\X,\Y)$ is a $n$-cotorsion pair in $\B$ by assumption,
$\mathcal{X}$ is contravariantly finite and $\mathcal{Y}$ is covariantly finite in $\B$.

(1)
Since $(i^{*},i_{*})$ and $(i_{*},i^{!})$ are adjoint pairs,
$i^{*}\mathcal{X}$ is contravariantly finite and $i^{!}\mathcal{Y}$ is covariantly finite in $\A$
by Lemma \ref{lem-con-cov}.
Since $i_{*}i^{*}\mathcal{X}\subseteq \mathcal{X}$ by assumption, $i_{*}i^{!}\mathcal{Y}\subseteq \mathcal{Y}$ by Lemma \ref{lem-=if}.
%Notice that $i_{*}i^{*}\Y\subseteq\Y$ by assumption, then $i^{*}\Y=i^{!}\Y$ by Lemma \ref{lem-=}.

Now we claim that $i^{*}\mathcal{X}=\bigcap\limits_{i=1}^{n}{^{\perp_i}(i^{!}\mathcal{Y})}.$
Let $X'\in i^{*}\X$, that is, there exists some $X\in \X$ such that $X'= i^{*}X$. For any $Y'\in i^!\mathcal{Y}$ and $1\leq k\leq n$, there exists some $Y\in \Y$ suc that $Y'=i^{!}Y$, then we have
$$\E^{k}_{\A}(X',Y')\cong\E^{k}_{\A}(i^{*}X,i^{!}Y)\cong \E^{k}_{\B}(X,i_{*}i^{!}Y)=0$$
by Proposition \ref{E-xt-1} and the fact that $i_{*}i^{!}\Y\subseteq\Y$.
Then $X'\in \bigcap\limits_{i=k}^{n}{^{\perp_k}(i^{!}\mathcal{Y})}$, and so $i^{*}\mathcal{X}\subseteq\bigcap\limits_{k=1}^{n}{^{\perp_k}(i^{!}\mathcal{Y})}.$
Conversely, let $X'\in \bigcap\limits_{k=1}^{n}{^{\perp_k}(i^{!}\mathcal{Y})}$.
Then
$$\E^{k}_{\B}(i_{*}X',Y)\cong \E^{k}_{\A}(X',i^{!}Y)=0$$
for any $Y\in \Y$ and $1\leq k\leq n$.
It follows that $i_{*}X'\in \bigcap\limits_{k=1}^{n}{^{\perp_k}\mathcal{Y}}=\mathcal{X}$.
Then $X'\cong i^{*}i_{*}X'\in i^{*}\mathcal{X}$ by the fact that $i^{*}i_{*} \Rightarrow\Id_{\A}$.
So $\bigcap\limits_{k=1}^{n}{^{\perp_k}(i^!\mathcal{Y})}\subseteq i^{*}\X$.
Thus $i^{*}\X=\bigcap\limits_{k=1}^{n}{^{\perp_k}(i^!\mathcal{Y})}$.

Similarly, we can prove that
$i^{!}\mathcal{Y}=\bigcap\limits_{k=1}^{n}{(i^{*}\mathcal{X})^{\perp_k}}.$
Then $(i^{*}\mathcal{X},i^{!}\mathcal{Y})$ is an $n$-cotorsion pair in $\A$.

(2) Since $(j_{!},j^{*})$ and $(j^{*},j_{*})$ are adjoint pairs,
 $j^{*}\mathcal{X}$ is contravariantly finite and $j^{*}\mathcal{Y}$ is covariantly finite in $\C$
by Lemma \ref{lem-con-cov}.

Now we claim that
$j^{*}\mathcal{X}=\bigcap\limits_{k=1}^{n}{^{\perp_k}(j^{*}\mathcal{Y})}.$

Let $X''\in j^{*}\mathcal{X}$, that is, there exists some $X\in \X$ such that $X''=j^{*}X$. For any $Y''\in j^*\mathcal{Y}$ and $1\leq k\leq n$, there exists some $Y\in \Y$ such that $Y''=j^*Y$, then we have
$$\E^{k}_{\C}(X'',Y'')\cong \E^{k}_{\C}(j^{*}X,j^{*}Y)\cong \E^{k}_{\B}(X,j_{*}j^{*}Y)=0$$
by Proposition \ref{E-xt-1} and the assumption that $j_{*}j^{*}\mathcal{Y}\subseteq \mathcal{Y}$.
Then $X''\in\bigcap\limits_{k=1}^{n}{^{\perp_{k}}(j^{*}\mathcal{Y})}$, it follows that
$j^{*}\mathcal{X}\subseteq\bigcap\limits_{k=1}^{n}{^{\perp_k}(j^{*}\mathcal{Y})}.$

On the other hand,
let $X''\in \bigcap\limits_{k=1}^{n}{^{\perp_k}(j^{*}\mathcal{Y})}$.
Then for any $Y\in \Y$ and $1\leq k\leq n$,
$$\E^{k}_{\B}(j_{!}X'',Y)\cong \E^{k}_{\C}(X'',j^{*}Y)=0$$
by Proposition \ref{E-xt-1}.
It follows that $j_{!}X''\in \bigcap\limits_{k=1}^{n}{^{\perp_k}\mathcal{Y}}=\mathcal{X}$.
Since $j^{*}j_{!}\Rightarrow\Id_{\C}$ by Lemma \ref{lem-rec},
$X''\cong j^{*}j_{!}X''\in j^{*}\mathcal{X}$.
Then $\bigcap\limits_{k=1}^{n}{^{\perp_k}(j^{*}\mathcal{Y})}\subseteq j^{*}\mathcal{X}.$

Similarly, we can prove that
$j^{*}\mathcal{Y}=\bigcap\limits_{i=1}^{n}({j^{*}\mathcal{X})^{\perp_i}}.$
Thus $(j^{*}\mathcal{X},j^{*}\mathcal{Y})$ is an $n$-cotorsion pair in $\C$.

(3)
It is clear that
\begin{align*}
\mathcal{X}\subseteq&\{B\in \B\mid j^{*}B\in j^{*}\mathcal{X},\  i^{*}B\in i^{*}\mathcal{X}\},\\
\mathcal{Y}\subseteq&\{B\in \B\mid j^{*}B\in j^{*}\mathcal{Y},\  i^{!}B\in i^{!}\mathcal{Y}\}.
\end{align*}
Conversely, suppose $X\in \{B\in \B\mid j^{*}B\in j^{*}\X,\ i^{*}B\in i^{*}\X\}$.
Since $i^{*}$ is exact, there exists an $\mathbb{E}$-triangle
$$
  \xymatrix{j_! j^\ast X\ar[r]&X\ar[r]&i_\ast i^\ast X\ar@{-->}[r]&}
$$
in $\mathcal{B}$ by Lemma \ref{lem-rec}.
For any $Y \in \mathcal{Y}$ and $1\leq k\leq n$, applying the functor to the above $\E$-triangle yields the following exact sequence
$$
\xymatrix@C=15pt{\cdots\ar[r]&
\E^{k}_{\B} (i_{*}i^{*}(X), Y) \ar[r]&\E^{k}_{\B} (X, Y) \ar[r]&\E^{k}_{\B}  (j_{!}j^{*}(X), Y)\ar[r]&\cdots}.$$
By (1) and (2), we know that  $(i^{*}\mathcal{X}, i^{!}\mathcal{Y})$ and $(j^{*}\X,j^{*}\Y)$ are $n$-cotorsion pairs in $\A$ and $\C$ respectively, so $\E^{k}_{\B}(i_{*}i^{*}X, Y)\cong \E^{k}_{\A}(i^{*}X, i^{!}Y)=0$ and $\E^{k}_{\B}(j_{!}j^{*}X, Y)\cong \E^{k}_{\C}(j^{*}X, j^{*}Y)=0$ by Proposition \ref{E-xt-1}.
It follows that $\E^{i}_{\B}(X,Y)=0$, and so $X\in\bigcap\limits_{k=1}^{n}{^{\perp_k}\mathcal{Y}}=\mathcal{X}$.
Thus
 $$\{B\in \B\mid j^{*}B\in j^{*}\X,\ i^{*}B\in i^{*}\X\}\subseteq\X.$$
 Similarly, we can prove
 $$\{B\in \B\mid j^{*}B\in j^{*}\Y,\ i^{!}B\in i^{!}\Y\}\subseteq\Y.$$
\end{proof}

Applying Theorem \ref{main-thm2} to triangulated categories, we get the following result, which is a generalization of the result in \cite[Theorem 3.3]{C13C}.

\begin{corollary}\label{main-2-1}
Let $(\A, \B, \C)$ be a recollement of triangulated categories, and let $(\mathcal{X},\mathcal{Y})$ be an $n$-cotorsion pair in $\B$. Then we have the following statements.
\begin{itemize}
\item[\rm (1)]  If $i_{*}i^{*}\mathcal{X}\subseteq\mathcal{X}$, $i_{*}i^{*}\mathcal{Y}\subseteq\mathcal{Y}$, then
$(i^{*}\mathcal{X},i^{!}\mathcal{Y})$ is an $n$-cotorsion pair in $\A$.
\item[\rm (2)] If $j_{*}j^{*}\mathcal{Y}\subseteq\mathcal{Y}$, then
$(j^{*}\mathcal{X},j^{*}\mathcal{Y})$ is an $n$-cotorsion pair in $\C$.
\item[\rm (3)] If $i_{*}i^{*}\mathcal{X}\subseteq\mathcal{X}$, $i_{*}i^{*}\mathcal{Y}\subseteq\mathcal{Y}$, and $j_{*}j^{*}\mathcal{Y}\subseteq\mathcal{Y}$, then
\begin{align*}
\mathcal{X}=&\{B\in \B\mid j^{*}B\in j^{*}\mathcal{X},\  i^{*}B\in i^{*}\mathcal{X}\},\\
\mathcal{Y}=&\{B\in \B\mid j^{*}B\in j^{*}\mathcal{Y},\  i^{!}B\in i^{!}\mathcal{Y}\}.
\end{align*}
\end{itemize}
\end{corollary}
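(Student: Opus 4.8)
The plan is to obtain Corollary~\ref{main-2-1} as a direct specialization of Theorem~\ref{main-thm2}; the only thing that needs to be checked is that the standing hypothesis of that theorem — namely that $i^{*}$ and $i^{!}$ are exact — is automatic in the triangulated setting. First I would recall that any triangulated category $(\mathcal{T},\Sigma,\triangle)$ carries its canonical extriangulated structure $(\mathcal{T},\E,\mathfrak{s})$ with $\E(C,A)=\Hom_{\mathcal{T}}(C,\Sigma A)$ and $\mathfrak{s}$ sending $\delta\colon C\to\Sigma A$ to (a rotation of) the distinguished triangle it determines. Under this identification the higher extension groups of Liu and Nakaoka satisfy $\E^{k}(A,B)\cong\Hom_{\mathcal{T}}(A,\Sigma^{k}B)$ for $k\geq 1$, since the syzygy/cosyzygy construction used to define $\E^{k}$ reduces to the iterated shift when $\mathcal{P}(\mathcal{T})$ and $\mathcal{I}(\mathcal{T})$ consist of zero objects. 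Consequently, a recollement $(\A,\B,\C)$ of triangulated categories in the sense of Beilinson--Bernstein--Deligne is, in particular, a recollement of extriangulated categories in the sense of Definition~\ref{def-rec}: the six functors are triangle functors, hence commute with the respective shifts and send distinguished triangles to distinguished triangles, which is exactly exactness of functors of extriangulated categories. In particular $i^{*}$ and $i^{!}$ are exact.

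With this observed, the three assertions follow verbatim from the corresponding items of Theorem~\ref{main-thm2}. The notion of $n$-cotorsion pair, the orthogonal subcategories ${}^{\perp_k}(-)$ and $(-)^{\perp_k}$, and the contravariant/covariant finiteness conditions are all computed with respect to $\E^{k}=\Hom_{\mathcal{T}}(-,\Sigma^{k}-)$, which recover the usual triangulated orthogonals, and the closure hypotheses $i_{*}i^{*}\mathcal{X}\subseteq\mathcal{X}$, $i_{*}i^{*}\mathcal{Y}\subseteq\mathcal{Y}$, $j_{*}j^{*}\mathcal{Y}\subseteq\mathcal{Y}$ translate literally between the two settings. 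Thus part~(1) gives that $(i^{*}\mathcal{X},i^{!}\mathcal{Y})$ is an $n$-cotorsion pair in $\A$, part~(2) gives that $(j^{*}\mathcal{X},j^{*}\mathcal{Y})$ is an $n$-cotorsion pair in $\C$, and part~(3) gives the stated description of $\mathcal{X}$ and $\mathcal{Y}$; no further argument is required.

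The only point demanding a moment of care — and the closest thing here to an obstacle — is precisely this compatibility of the abstract $\E^{k}$ with the iterated shift $\Sigma^{k}$ in a triangulated category; once it is in place, the hypotheses and conclusions of Theorem~\ref{main-thm2} match those of the corollary on the nose, and the proof is immediate. Finally, I would note that when $n=1$ this recovers \cite[Theorem~3.3]{C13C}, so the corollary genuinely generalizes Chen's result.
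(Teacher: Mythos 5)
Your proposal is correct and follows exactly the route the paper takes: the corollary is obtained as an immediate specialization of Theorem~\ref{main-thm2}, the paper offering no separate argument beyond the remark that triangulated categories (with their canonical extriangulated structure, where $\mathcal{P}$ and $\mathcal{I}$ are zero and every triangle functor is exact) automatically satisfy the exactness hypotheses on $i^{*}$ and $i^{!}$. Your explicit check that $\E^{k}(A,B)\cong\Hom_{\mathcal{T}}(A,\Sigma^{k}B)$ is the right justification for this specialization and matches the paper's intent.
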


When $n=1$ in Theorem \ref{main-thm2}, we get the following result.

\begin{corollary}{\rm (\cite[Theorem 4.6]{MZ25D} and \cite[Theorem 4.4]{WWZ20R})}\label{main-2-2}
Let $(\A, \B, \C)$ be a recollement of extriangulated categories, and let $(\mathcal{X},\mathcal{Y})$ be a otorsion pair in $\B$. Assume that $i^{*}$ and $i^!$ are exact, then we have the following statements.
\begin{itemize}
\item[\rm (1)]  If $i_{*}i^{*}\mathcal{X}\subseteq\mathcal{X}$, $i_{*}i^{*}\mathcal{Y}\subseteq\mathcal{Y}$, then
$(i^{*}\mathcal{X},i^{!}\mathcal{Y})$ is a cotorsion pair in $\A$.
\item[\rm (2)] If $j_{*}j^{*}\mathcal{Y}\subseteq\mathcal{Y}$, then
$(j^{*}\mathcal{X},j^{*}\mathcal{Y})$ is a cotorsion pair in $\C$.
\item[\rm (3)] If $i_{*}i^{*}\mathcal{X}\subseteq\mathcal{X}$, $i_{*}i^{*}\mathcal{Y}\subseteq\mathcal{Y}$, and $j_{*}j^{*}\mathcal{Y}\subseteq\mathcal{Y}$, then
\begin{align*}
\mathcal{X}=&\{B\in \B\mid j^{*}B\in j^{*}\mathcal{X},\  i^{*}B\in i^{*}\mathcal{X}\},\\
\mathcal{Y}=&\{B\in \B\mid j^{*}B\in j^{*}\mathcal{Y},\  i^{!}B\in i^{!}\mathcal{Y}\}.
\end{align*}
\end{itemize}
\end{corollary}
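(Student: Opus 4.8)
The plan is to obtain Corollary~\ref{main-2-2} as the special case $n=1$ of Theorem~\ref{main-thm2}. First I would note that the hypotheses of the corollary are verbatim those of Theorem~\ref{main-thm2} when $n=1$: $(\A,\B,\C)$ is a recollement of extriangulated categories, $i^{*}$ and $i^{!}$ are exact, and in each part the relevant closure conditions among $i_{*}i^{*}\X\subseteq\X$, $i_{*}i^{*}\Y\subseteq\Y$, $j_{*}j^{*}\Y\subseteq\Y$ are imposed. Hence the only substantive point is to reconcile the terminology, since Theorem~\ref{main-thm2} is phrased in terms of $n$-cotorsion pairs whereas the corollary is phrased in terms of ordinary cotorsion pairs in the sense of Definition~\ref{DefCotors}.

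Next I would invoke Remark~\ref{remark-nc}(3), which records, via the characterization of cotorsion pairs in Proposition~\ref{prop-1}, that a $1$-cotorsion pair in the sense of Definition~\ref{def-n-cotor} is precisely a cotorsion pair in the sense of Nakaoka and Palu; here one uses that both notions require the two subcategories to be closed under direct summands, which is part of the standing hypotheses. Applying this equivalence to the hypothesis, $(\X,\Y)$ being a cotorsion pair in $\B$ is the same as $(\X,\Y)$ being a $1$-cotorsion pair in $\B$, so Theorem~\ref{main-thm2} with $n=1$ applies. Its conclusions then state exactly that $(i^{*}\X,i^{!}\Y)$ is a $1$-cotorsion pair in $\A$ under the conditions of part~(1), that $(j^{*}\X,j^{*}\Y)$ is a $1$-cotorsion pair in $\C$ under the condition of part~(2), and that the displayed identities for $\X$ and $\Y$ hold under the conditions of part~(3). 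Translating the first two conclusions back through Remark~\ref{remark-nc}(3) yields the cotorsion pairs asserted in parts~(1) and~(2), and part~(3) is carried over unchanged.

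I do not expect any genuine obstacle: the argument is a pure specialization. The one point meriting a moment's care is that, in order to apply the dictionary of Remark~\ref{remark-nc}(3) to the induced pairs, one needs the image subcategories $i^{*}\X$, $i^{!}\Y$, $j^{*}\X$, $j^{*}\Y$ to be closed under direct summands; but this is automatic, since Theorem~\ref{main-thm2} already asserts that these subcategories constitute $n$-cotorsion pairs, whose constituents are by Definition~\ref{def-n-cotor} closed under direct summands.
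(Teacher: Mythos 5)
Your proposal is correct and matches the paper's approach exactly: the paper derives Corollary~\ref{main-2-2} simply as the case $n=1$ of Theorem~\ref{main-thm2}, with the identification of $1$-cotorsion pairs and ordinary cotorsion pairs supplied by Remark~\ref{remark-nc}(3) via Proposition~\ref{prop-1}. Your additional remark about direct-summand closure of the image subcategories is a reasonable bit of due diligence that the paper leaves implicit.
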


Applying  Corollary \ref{main-2-2} to triangulated categories, we get the following.

\begin{corollary}\label{main-2-3}
{\rm(\cite[Theorem 3.3]{C13C})}
Let $(\A, \B, \C)$ be a recollement of triangulated categories, and let $(\mathcal{X},\mathcal{Y})$ be a otorsion pair in $\B$. Then we have the following statements.
\begin{itemize}
\item[\rm (1)]  If $i_{*}i^{*}\mathcal{X}\subseteq\mathcal{X}$, $i_{*}i^{*}\mathcal{Y}\subseteq\mathcal{Y}$, then
$(i^{*}\mathcal{X},i^{!}\mathcal{Y})$ is a cotorsion pair in $\A$.
\item[\rm (2)] If $j_{*}j^{*}\mathcal{Y}\subseteq\mathcal{Y}$, then
$(j^{*}\mathcal{X},j^{*}\mathcal{Y})$ is a cotorsion pair in $\C$.
\item[\rm (3)] If $i_{*}i^{*}\mathcal{X}\subseteq\mathcal{X}$, $i_{*}i^{*}\mathcal{Y}\subseteq\mathcal{Y}$, and $j_{*}j^{*}\mathcal{Y}\subseteq\mathcal{Y}$, then
\begin{align*}
\mathcal{X}=&\{B\in \B\mid j^{*}B\in j^{*}\mathcal{X},\  i^{*}B\in i^{*}\mathcal{X}\},\\
\mathcal{Y}=&\{B\in \B\mid j^{*}B\in j^{*}\mathcal{Y},\  i^{!}B\in i^{!}\mathcal{Y}\}.
\end{align*}
\end{itemize}
\end{corollary}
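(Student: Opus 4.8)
The plan is to obtain this statement as the triangulated specialization of Corollary~\ref{main-2-2}; the only point that needs checking is that the standing exactness hypothesis of that corollary is automatically satisfied for recollements of triangulated categories.

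First I would recall that any triangulated category $\mathcal{T}$ carries a canonical extriangulated structure in which $\E(C,A)=\Hom_{\mathcal{T}}(C,A[1])$, the $\E$-triangles are precisely the distinguished triangles, and $\E^{k}(A,B)\cong\Hom_{\mathcal{T}}(A,B[k])$ for $k\geq 1$. In this structure $\P(\mathcal{T})=\I(\mathcal{T})=0$, so $\mathcal{T}$ has enough projectives and enough injectives, and Condition~\ref{WIC} holds trivially because every morphism is simultaneously an inflation and a deflation. Under this identification a cotorsion pair in $\mathcal{T}$ in the classical sense of \cite{C13C} is exactly a cotorsion pair in the associated extriangulated category (cf.\ Remark~\ref{remark-nc}(3)), and by \cite{WWZ20R} a recollement $(\A,\B,\C)$ of triangulated categories is a recollement of extriangulated categories in the sense of Definition~\ref{def-rec}.

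The key observation is then that all six functors of a recollement of triangulated categories are triangle functors, and a triangle functor sends distinguished triangles to distinguished triangles, hence sends $\E$-triangles to $\E$-triangles and is therefore exact when viewed as a functor of extriangulated categories. In particular $i^{*}$ and $i^{!}$ are exact, so the hypotheses of Corollary~\ref{main-2-2} are met. Applying Corollary~\ref{main-2-2} to the extriangulated recollement $(\A,\B,\C)$ together with the cotorsion pair $(\X,\Y)$, and re-reading its conclusions in the triangulated language via the identification above, yields statements (1), (2) and (3) exactly as formulated. There is no genuine obstacle here: the entire argument reduces to the (routine, and already recorded in \cite{WWZ20R}) facts that a triangulated recollement is an extriangulated recollement with $i^{*}$ and $i^{!}$ exact, and that the extriangulated and triangulated notions of a cotorsion pair agree in this setting.
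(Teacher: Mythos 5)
Your proposal is correct and follows exactly the route the paper takes: the paper derives this corollary simply by specializing Corollary~\ref{main-2-2} to triangulated categories, and your write-up merely supplies the (routine) verifications the paper leaves implicit, namely that a triangulated recollement is an extriangulated recollement with $i^{*}$ and $i^{!}$ exact and that the two notions of cotorsion pair agree. Nothing is missing.
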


Applying Theorem \ref{main-thm2} to $(n+1)$-cluster tilting subcategories, we get the following result.

\begin{corollary}\label{main-2-4}
         Let $(\A, \B, \C)$ be a recollement of extriangulated categories, and
let $\mathcal{X}$ be an $(n+1)$-cluster tilting subcategory in $\B$. Assume that $i^{*}$ and $i^!$ are exact, then we have the following statements.
\begin{itemize}
\item[\rm (1)] If $i_{*}i^{*}\mathcal{X} \subseteq \mathcal{X}$, then $i^{*}\mathcal{X}$ is an $(n+1)$-cluster tilting subcategory in $\A$.

\item[\rm (2)] If $j_{*}j^{*}\mathcal{X} \subseteq \mathcal{X}$, then $j^{*}\mathcal{X}$ is an $(n+1)$-cluster tilting subcategory in $\C$.

\item[\rm(3)] If $i_{*}i^{*}\mathcal{X}\subseteq \mathcal{X}$ and $j_{*}j^{*}\mathcal{X}\subseteq \mathcal{X}$, then we have
\begin{align*}
\mathcal{X}=&\{B\in \B\mid j^{*}B\in j^{*}\mathcal{X}, i^{*}B\in i^{*}\mathcal{X}\}.
\end{align*}
    \end{itemize}
\end{corollary}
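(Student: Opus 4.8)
The plan is to reduce everything to Theorem~\ref{main-thm2} by setting $\mathcal{Y}=\mathcal{X}$. By Remark~\ref{remark-nc}(2), the hypothesis that $\mathcal{X}$ is an $(n+1)$-cluster tilting subcategory of $\B$ is equivalent to $(\mathcal{X},\mathcal{X})$ being an $n$-cotorsion pair in $\B$, so Theorem~\ref{main-thm2} applies with $\mathcal{Y}=\mathcal{X}$ (and $i^{*}$, $i^{!}$ exact by assumption). Under this identification all the closure hypotheses in Theorem~\ref{main-thm2} collapse: the conditions $i_{*}i^{*}\mathcal{X}\subseteq\mathcal{X}$ and $i_{*}i^{*}\mathcal{Y}\subseteq\mathcal{Y}$ become one and the same, and likewise $j_{*}j^{*}\mathcal{Y}\subseteq\mathcal{Y}$ is just $j_{*}j^{*}\mathcal{X}\subseteq\mathcal{X}$. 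For part (2) this finishes things almost immediately: Theorem~\ref{main-thm2}(2) gives that $(j^{*}\mathcal{X},j^{*}\mathcal{Y})=(j^{*}\mathcal{X},j^{*}\mathcal{X})$ is an $n$-cotorsion pair in $\C$, and Remark~\ref{remark-nc}(2) translates this back into the statement that $j^{*}\mathcal{X}$ is $(n+1)$-cluster tilting. For part (3), Theorem~\ref{main-thm2}(3) applied with $\mathcal{Y}=\mathcal{X}$ directly yields the desired description of $\mathcal{X}$, the two displayed equalities there coinciding.

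The genuine work is in part (1). Theorem~\ref{main-thm2}(1) only produces the $n$-cotorsion pair $(i^{*}\mathcal{X},i^{!}\mathcal{Y})=(i^{*}\mathcal{X},i^{!}\mathcal{X})$ in $\A$, and to conclude via Remark~\ref{remark-nc}(2) that $i^{*}\mathcal{X}$ is $(n+1)$-cluster tilting we must verify the extra equality $i^{*}\mathcal{X}=i^{!}\mathcal{X}$. I would argue as follows. From $i_{*}i^{*}\mathcal{X}\subseteq\mathcal{X}$ together with $\mathcal{Y}=\mathcal{X}$, Lemma~\ref{lem-=if}(1) supplies the companion inclusion $i_{*}i^{!}\mathcal{X}\subseteq\mathcal{X}$. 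Now take $A'\in i^{*}\mathcal{X}$, say $A'=i^{*}X$ with $X\in\mathcal{X}$; then $i_{*}A'=i_{*}i^{*}X\in\mathcal{X}$, and since $\Id_{\A}\Rightarrow i^{!}i_{*}$ is a natural isomorphism (Lemma~\ref{lem-rec}(1)) we get $A'\cong i^{!}i_{*}A'\in i^{!}\mathcal{X}$, so $i^{*}\mathcal{X}\subseteq i^{!}\mathcal{X}$. The reverse inclusion is symmetric: for $A'=i^{!}X\in i^{!}\mathcal{X}$ one has $i_{*}A'=i_{*}i^{!}X\in\mathcal{X}$ by the inclusion just obtained, and $i^{*}i_{*}\Rightarrow\Id_{\A}$ gives $A'\cong i^{*}i_{*}A'\in i^{*}\mathcal{X}$. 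Hence $i^{*}\mathcal{X}=i^{!}\mathcal{X}=:\mathcal{Z}$, the pair from Theorem~\ref{main-thm2}(1) is $(\mathcal{Z},\mathcal{Z})$, and Remark~\ref{remark-nc}(2) completes the proof.

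I expect the equality $i^{*}\mathcal{X}=i^{!}\mathcal{X}$ in part (1) to be the only real obstacle; once $\mathcal{Y}$ is taken equal to $\mathcal{X}$, everything else is a matter of checking which hypotheses of Theorem~\ref{main-thm2} survive the collapse. (One should also note, but this is routine, that an $(n+1)$-cluster tilting subcategory is automatically closed under direct summands, so the hypotheses of Theorem~\ref{main-thm2} and Remark~\ref{remark-nc}(2) are met throughout.)
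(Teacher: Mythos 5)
Your proposal is correct and follows exactly the route the paper intends: the corollary is presented as an immediate application of Theorem~\ref{main-thm2} with $\mathcal{Y}=\mathcal{X}$, using Remark~\ref{remark-nc}(2) to translate between $(n+1)$-cluster tilting subcategories and $n$-cotorsion pairs of the form $(\mathcal{X},\mathcal{X})$. The paper supplies no written proof and in particular glosses over the point you rightly isolate in part (1) --- that Theorem~\ref{main-thm2}(1) only yields the pair $(i^{*}\mathcal{X},i^{!}\mathcal{X})$, so one must still verify $i^{*}\mathcal{X}=i^{!}\mathcal{X}$ --- and your argument for this equality via Lemma~\ref{lem-=if}(1) together with the natural isomorphisms $i^{*}i_{*}\Rightarrow\Id_{\A}$ and $\Id_{\A}\Rightarrow i^{!}i_{*}$ from Lemma~\ref{lem-rec}(1) is correct and fills that gap.
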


Specially, applying Corollary \ref{main-2-4} to triangulated categories, we get the result in \cite[Theorem 4.4]{LZZ24R}.

\begin{corollary}\label{main-2-5}
    {\rm (\cite[Theorem 4.4]{LZZ24R})}
      Let $(\A, \B, \C)$ be a recollement of triangulated categories, and
let $\mathcal{X}$ be an $(n+1)$-cluster tilting subcategory in $\B$. Then we have the following statements.
\begin{itemize}
\item[\rm (1)] If $i_{*}i^{*}\mathcal{X} \subseteq \mathcal{X}$, then $i^{*}\mathcal{X}$ is an $(n+1)$-cluster tilting subcategory in $\A$.

\item[\rm (2)] If $j_{*}j^{*}\mathcal{X}\subseteq \mathcal{X}$, then $j^{*}\mathcal{X}$ is an $(n+1)$-cluster tilting subcategory in $\C$.
    \end{itemize}
\end{corollary}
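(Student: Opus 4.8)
The plan is to obtain Corollary~\ref{main-2-5} as a direct specialization of Corollary~\ref{main-2-4} to the triangulated setting, so the work consists almost entirely of checking that the extriangulated hypotheses either become automatic or are vacuous. The first step is to recall that a recollement of triangulated categories in the sense of \cite{BBD} is built from six triangle functors; in particular $i^{*}$ and $i^{!}$ are exact as functors of triangulated categories, hence exact in the extriangulated sense. Thus the standing exactness assumption ``$i^{*}$ and $i^{!}$ are exact'' present in Corollary~\ref{main-2-4} is satisfied for free and is simply dropped from the statement here, exactly as was done for Corollaries~\ref{main-1-1}, \ref{main-2-1} and \ref{main-2-3}.

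The second step is a short compatibility check between the two notions of cluster tilting subcategory. When a triangulated category is regarded as an extriangulated category via its canonical structure \cite{Na}, its projective and injective subcategories consist of the zero objects, the $k$-th cosyzygy functor agrees with the $k$-th suspension $\Sigma^{k}$, and the higher bifunctors satisfy $\E^{k}(A,B)\cong\Hom(A,\Sigma^{k}B)$ for $k\geq 1$. Under this translation the notion of an $(n+1)$-cluster tilting subcategory used throughout Section~3 becomes the classical triangulated one appearing in \cite{LZZ24R}, and the closure conditions $i_{*}i^{*}\mathcal{X}\subseteq\mathcal{X}$ and $j_{*}j^{*}\mathcal{X}\subseteq\mathcal{X}$ are verbatim the hypotheses of Corollary~\ref{main-2-4}(1) and (2).

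Granting these two observations, statement~(1) is precisely Corollary~\ref{main-2-4}(1) and statement~(2) is precisely Corollary~\ref{main-2-4}(2); part~(3) of Corollary~\ref{main-2-4} is not needed. There is essentially no genuine obstacle: the only item that deserves a moment's care is the verification that the canonical extriangulated structure on a triangulated category renders the recollement functors exact and identifies the higher $\E$-groups with iterated shifts, after which the result is an immediate transcription of Corollary~\ref{main-2-4}.
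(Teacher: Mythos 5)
Your proposal is correct and follows exactly the route the paper takes: the paper derives Corollary~\ref{main-2-5} by specializing Corollary~\ref{main-2-4} to triangulated categories, where the exactness of $i^{*}$ and $i^{!}$ is automatic and $\E^{k}(A,B)\cong\Hom(A,\Sigma^{k}B)$ identifies the extriangulated notion of $(n+1)$-cluster tilting subcategory with the classical triangulated one. Your explicit verification of these compatibility points is a slightly more detailed write-up of the same argument.
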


\vspace{5mm}

\hspace{-6mm}\textbf{Data Availability}\hspace{2mm} Data sharing not applicable to this article as no datasets were generated or analysed during
the current study.
\vspace{2mm}

\hspace{-6mm}\textbf{Conflict of Interests}\hspace{2mm} The authors declare that they have no conflicts of interest to this work.
\vspace{5mm}

{\bf Xin Ma}\\
College of Science, Henan University of Engineering, 451191 Zhengzhou, Henan, P. R. China\\
E-mail: maxin@haue.edu.cn
\\[0.3cm]
\textbf{Panyue Zhou}\\
School of Mathematics and Statistics, Changsha University of Science and Technology, 410114 Changsha, Hunan,  P. R. China\\
E-mail: panyuezhou@163.com

\end{document}